\newcommand{\tr}{^{\sf T}}
\newcommand{\m}[1]{{\bf{#1}}}
\newcommand{\C}[1]{{\cal {#1}}}
\newtheorem{remark}{Remark}
\title{{\bf Gradient-based methods for sparse recovery}
\thanks{ October 25, 2009.
This material is based upon work supported by the
National Science Foundation under Grant 0619080.
}}
\date{}
\author{
        William W. Hager\thanks{{\tt hager@math.ufl.edu},
        http://www.math.ufl.edu/$\sim$hager,
        PO Box 118105,
        Department of Mathematics,
        University of Florida, Gainesville, FL 32611-8105.
        Phone (352) 392-0281. Fax (352) 392-8357.}
\and
        Dzung T. Phan\thanks{{\tt dphan@math.ufl.edu},
        http://www.math.ufl.edu/$\sim$dphan,
        PO Box 118105,
        Department of Mathematics,
        University of Florida, Gainesville, FL 32611-8105.
        Phone (352) 392-0281. Fax (352) 392-8357.}
\and
        Hongchao Zhang\thanks{{\tt hozhang@math.lsu.edu},
        http://www.math.lsu.edu/$\sim$hozhang,
        Department of Mathematics,
        140 Lockett Hall,
        Center for Computation and Technology,
        Louisiana State University, Baton Rouge, LA 70803-4918.
        Phone (225) 578-1982. Fax (225) 578-4276.}
}
\begin{document}
\maketitle

\begin{abstract}
The convergence rate is analyzed for the SpaSRA algorithm
(Sparse Reconstruction by Separable Approximation)
for minimizing a sum $f (\m{x}) + \psi (\m{x})$ where $f$ is
smooth and $\psi$ is convex, but possibly nonsmooth.
It is shown that if $f$ is convex, then the error in the
objective function at iteration $k$, for $k$ sufficiently large,
is bounded by $a/(b+k)$ for suitable choices of $a$ and $b$.
Moreover, if the objective function is strongly convex, then the
convergence is $R$-linear.
An improved version of the algorithm based on a
cycle version of the BB iteration and an adaptive line search is given.
The performance of the algorithm is investigated using applications
in the areas of signal processing and image reconstruction.
\end{abstract}

\begin{AMS}
90C06, 90C25, 65Y20, 94A08
\end{AMS}

\begin{keywords}
SpaRSA, ISTA, sparse recovery, sublinear convergence, linear convergence,
image reconstruction, denoising, compressed sensing, nonsmooth optimization,
nonmonotone convergence, BB method
\end{keywords}

\pagestyle{myheadings} \thispagestyle{plain}
\markboth{W. W. HAGER, D. T. PHAN, H. ZHANG}
{GRADIENT-BASED METHODS FOR SPARSE RECOVERY}

\section{Introduction}
\label{introduction}

In this paper we consider the following optimization problem
\begin{equation}\label{main}
\min_{\m{x} \in \mathbb{R}^n} \;\; \phi(\m{x}) : = f(\m{x}) + \psi(\m{x}),
\end{equation}
where $f: \mathbb{R}^n \rightarrow \mathbb{R}$ is a smooth function,
and $\psi: \mathbb{R}^n \rightarrow \mathbb{R}$ is convex.
The function $\psi$, usually called the \emph{regularizer} or
\emph{regularization function}, is finite for
all $\m{x} \in \mathbb{R}^n$, but possibly nonsmooth.
An important application of (\ref{main}), found in the signal
processing literature, is the well-known $\ell_2 -
\ell_1$ problem (called \emph{basis pursuit denoising} in
\cite{chen98})
\begin{equation}\label{BPDN}
\min_{\m{x} \in \mathbb{R}^n} \;\; \frac{1}{2}\|\m{A}\m{x}-\m{b}\|^2_2 +
\tau \|\m{x}\|_1,
\end{equation}
where $\m{A} \in \mathbb{R}^{k \times n}$ (usually $k \le n$),
$\m{b} \in \mathbb{R}^k, \tau \in \mathbb{R}$, $\tau \ge 0$,
and $\| \cdot \|_1$ is the $1$-norm.

Recently, Wright, Nowak, and Figueiredo \cite{wright09a} introduced the
Sparse Reconstruction by Separable Approximation algorithm
(SpaRSA) for solving (\ref{main}).
The algorithm has been shown to work well in practice.
In \cite{wright09a} the authors establish global convergence of SpaRSA.
In this paper, we prove an estimate of the form
$a/(b+k)$ for the error in the objective function when $f$ is convex.
If the objective function is strongly convex,
then the convergence of the objective
function and the iterates is at least R-linear.
A strategy is presented for improving the performance of SpaRSA
based on a cyclic Barzilai-Borwein step \cite{dyh01, dhsz05, fmmr99, rs02}
and an adaptive choice \cite{hz05a}
for the reference function value in the line search.
The paper concludes with a series of numerical experiments
in the areas of signal processing and image reconstruction.

Throughout the paper
$\nabla f (\m{x})$ denotes the gradient of $f$, a row vector.
The gradient of $f (\m{x})$, arranged as a column vector,
is $\m{g} (\m{x})$.
The subscript $k$ often represents the iteration number
in an algorithm, and $\m{g}_k$ stands for $\m{g}(\m{x}_k)$.
$\| \cdot\|$ denotes $\| \cdot \|_2$, the Euclidean norm.
$\partial \psi (\m{y})$ is the subdifferential at $\m{y}$,
a set of row vectors.
If $\m{p} \in \partial \psi (\m{y})$, then
\[
\psi (\m{x}) \ge \psi (\m{y}) + \m{p}(\m{x} - \m{y})
\]
for all $\m{x} \in \mathbb{R}^n$.

\section{The SpaRSA algorithm}
\label{SpaSRA}

The SpaRSA algorithm, as presented in \cite{wright09a}, is as follows:
\bigskip

{\tt
\begin{tabular}{l}
{\sc Sparse Reconstruction by Separable Approximation (SpaRSA)}\\
\hline
Given $\eta > 1$, $\sigma \in (0,1)$,
$[\alpha_{\min}, \alpha_{\max}] \subset (0, \infty)$,
and starting guess $\m{x}_1$. \\
Set $k = 1$.\\
\begin{tabular}{rl}
Step 1. & Choose $\alpha_0 \in [\alpha_{\min} , \alpha_{\max} ]$\\
Step 2. & Set $\alpha = \eta^j \alpha_0$
where $j \ge 0$ is the smallest integer such that \\[.1in]
& $\phi (\m{x}_{k+1}) \le \phi_k^R - \sigma \alpha
\|\m{x}_{k+1} - \m{x}_k\|^2 $ where\\[.1in]
& $\m{x}_{k+1} = \mbox{arg} \min \{
\nabla f(\m{x}_k)\m{z} + \alpha \|\m{z} - \m{x}_k\|^2
+ \psi (\m{z}) : \m{z} \in \mathbb{R}^n\}$. \\[.1in]
Step 3. & If $\m{x}_{k+1} = \m{x}_k$, terminate. \\
Step 4. & Set $k=k+1$ and go to step 1. \\
\end{tabular}\\
\hline \\
\end{tabular}}

The parameter $\alpha_0$ in \cite{wright09a} was taken to be the
BB parameter \cite{bb88} with safeguards:
\begin{equation}\label{BB}
\alpha_0 = \alpha^{BB}_k =
\min \;\; \{ \|\alpha \m{s}_k - \m{y}_k\|:
\alpha_{\min} \le \alpha \le \alpha_{\max} \}
\end{equation}
where $\m{s}_k = \m{x}_k - \m{x}_{k-1}$ and
$\m{y}_k = \m{g}_k - \m{g}_{k-1}$.
Also, in \cite{wright09a}, the reference value $\phi_k^R$ is
the GLL \cite{gll86} reference value $\phi_k^{\max}$ defined by
\begin{equation}\label{GLL}
\phi_k^{\max} =
\max \{ \phi (\m{x}_{k-j}) : 0 \le j < \min(k,M)\} .
\end{equation}
In other words, at iteration $k$, $\phi_k^{\max}$ is the maximum
of the $M$ most recent values for the objective function.
Note that if $\m{x}_{k+1} = \m{x}_k$, then
\[
\m{0} \in \nabla f(\m{x}_k) + \partial \psi(\m{x}_{k+1}) =
\nabla f(\m{x}_{k+1}) + \partial \psi(\m{x}_{k+1}).
\]
Hence, $\m{x}_{k+1} = \m{x}_k$ is a stationary point.

The overall structure of the SpaRSA algorithm is closely related to that
of the Iterative Shrinkage Thresholding Algorithm (ISTA)
\cite{Chambolle98,Daubechies04,Figueiredo03,HaleZhang07,Vonesch07}.
ISTA, however, employs a fixed choice for $\alpha$ related to the
Lipschitz constant for $f$, while  SpaRSA employs a nonmonotone line search.
A sublinear convergence result for a monotone line search version
of ISTA is given by Beck and Teboulle \cite{beck09} and by
Nesterov \cite{nesterov07}.
In Section \ref{convergence}
we give a sublinear convergence result for the nonmonotone SpaRSA,
while Section \ref{strong_convexity} gives a linear convergence
result when the objective function is strongly convex.

In \cite{wright09a} it is shown that the line search in Step 2
terminates for a finite $j$ when $f$ is Lipschitz continuously differentiable.
Here we weaken this condition by only requiring Lipschitz continuity
over a bounded set.
\begin{proposition}
\label{StepProposition}
Let $\C{L}$ be the level set defined by
\begin{equation}\label{L}
\C{L} = \{ \m{x} \in \mathbb{R}^n : \phi (\m{x}) \le \phi(\m{x}_1) \} .
\end{equation}
We make the following assumptions:
\begin{itemize}
\item[{\rm (A1)}]
The level set $\C{L}$ is contained in
the interior of a compact, convex set $\C{K}$,
and $f$ is Lipschitz continuously differentiable on $\C{K}$.
\item[{\rm (A2)}]
$\psi$ is convex and $\psi(\m{x})$ is finite for all $\m{x} \in \mathbb{R}^n$.
\end{itemize}
If $\phi (\m{x}_k) \le \phi_k^R \le \phi(\m{x}_1)$,
then there exists $\bar{\alpha}$ with the property that
\[
\phi (\m{x}_{k+1}) \le \phi_k^R - \sigma \alpha
\|\m{x}_{k+1} -\m{x}_k\|^2
\]
whenever $\alpha \ge \bar{\alpha}$ where $\m{x}_{k+1}$ is
obtained as in Step $2$ of SpaRSA.
\end{proposition}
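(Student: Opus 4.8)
The plan is to combine the optimality of $\m{x}_{k+1}$ in the Step 2 subproblem with the descent lemma for $f$; the one wrinkle is that (A1) only supplies Lipschitz differentiability on $\C{K}$, so I must first confine $\m{x}_{k+1}$ to $\C{K}$ by taking $\alpha$ large. First I would record the consequence of $\m{x}_{k+1}$ minimizing $\nabla f(\m{x}_k)\m{z} + \alpha\|\m{z}-\m{x}_k\|^2 + \psi(\m{z})$. Comparing the objective at $\m{x}_{k+1}$ and at $\m{x}_k$ and rearranging gives the basic inequality
\[
\psi(\m{x}_{k+1}) - \psi(\m{x}_k) \le -\nabla f(\m{x}_k)(\m{x}_{k+1}-\m{x}_k) - \alpha\|\m{x}_{k+1}-\m{x}_k\|^2 \qquad (*).
\]

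The key a priori estimate is that $\|\m{x}_{k+1}-\m{x}_k\|$ is $O(1/\alpha)$. To obtain it, pick any subgradient $\m{q}\in\partial\psi(\m{x}_k)$; convexity gives $\psi(\m{x}_{k+1})-\psi(\m{x}_k)\ge \m{q}(\m{x}_{k+1}-\m{x}_k)$, and feeding this into $(*)$ and applying the Cauchy--Schwarz inequality yields $\alpha\|\m{x}_{k+1}-\m{x}_k\| \le \|\nabla f(\m{x}_k)\| + \|\m{q}\|$. Since $\phi(\m{x}_k)\le\phi(\m{x}_1)$ puts $\m{x}_k\in\C{L}$, and $\C{L}$ is compact (closed because $\phi$ is continuous, bounded because $\C{L}\subset\C{K}$), both $\|\nabla f\|$ (continuous on the compact $\C{K}$) and the subgradients of the finite convex $\psi$ (locally, hence uniformly, bounded on $\C{L}$) are bounded by a single constant $C$ independent of $k$, giving $\|\m{x}_{k+1}-\m{x}_k\|\le C/\alpha$.

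I want to stress that I use the subgradient of $\psi$ at $\m{x}_k$ rather than at $\m{x}_{k+1}$: the optimality condition naturally produces a subgradient at $\m{x}_{k+1}$, but bounding that would require already knowing $\m{x}_{k+1}\in\C{K}$, which is circular. With the $O(1/\alpha)$ bound in hand, let $\rho>0$ be the distance from the compact set $\C{L}$ to the complement of $\C{K}$. Then for $\alpha\ge C/\rho$ we have $\m{x}_{k+1}\in\C{K}$, and since $\C{K}$ is convex the whole segment $[\m{x}_k,\m{x}_{k+1}]$ lies in $\C{K}$, so the descent lemma applies: $f(\m{x}_{k+1})\le f(\m{x}_k)+\nabla f(\m{x}_k)(\m{x}_{k+1}-\m{x}_k)+\tfrac{L}{2}\|\m{x}_{k+1}-\m{x}_k\|^2$, where $L$ is the Lipschitz constant of $\nabla f$ on $\C{K}$.

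Finally I would add $\psi(\m{x}_{k+1})$ to both sides and substitute $(*)$; the first-order terms cancel, leaving $\phi(\m{x}_{k+1})\le\phi(\m{x}_k)+(\tfrac{L}{2}-\alpha)\|\m{x}_{k+1}-\m{x}_k\|^2$. Using $\phi(\m{x}_k)\le\phi_k^R$ and imposing $\tfrac{L}{2}-\alpha\le-\sigma\alpha$, i.e. $\alpha\ge L/(2(1-\sigma))$, produces exactly the claimed sufficient-decrease inequality, so $\bar{\alpha}=\max\{C/\rho,\ L/(2(1-\sigma))\}$ works. The main obstacle is precisely the containment issue: without the uniform $O(1/\alpha)$ step bound and the resulting guarantee $\m{x}_{k+1}\in\C{K}$, the descent lemma, and with it the entire argument, is unavailable because (A1) provides Lipschitz differentiability only on $\C{K}$.
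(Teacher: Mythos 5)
Your proposal is correct and follows essentially the same route as the paper's proof: compare the subproblem objective at $\m{x}_{k+1}$ and $\m{x}_k$, use a subgradient of $\psi$ at $\m{x}_k$ together with uniform bounds for $\|\m{g}_k\|$ and the subgradients over the compact level set to get $\|\m{x}_{k+1}-\m{x}_k\|=O(1/\alpha)$, force $\m{x}_{k+1}\in\C{K}$ for $\alpha$ large, and then apply the descent lemma and the condition $\tfrac{L}{2}-\alpha\le-\sigma\alpha$. The paper's $\bar{\alpha}=\max\{\beta,\lambda/(2(1-\sigma))\}$ is exactly your $\max\{C/\rho,\,L/(2(1-\sigma))\}$.
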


\begin{proof}
Let $\Phi_k$ be defined by
\[
\Phi_k (\m{z}) = f(\m{x}_k) + \nabla f(\m{x}_k)(\m{z}-\m{x}_k)
+ \alpha \|\m{z} - \m{x}_k\|^2 + \psi (\m{z}) ,
\]
where $\alpha \ge 0$.
Since $\Phi_k$ is a strongly convex quadratic, its level sets are
compact, and the minimizer $\m{x}_{k+1}$ in Step 2 exists.
Since $\m{x}_{k+1}$ is the minimizer of $\Phi_k$, we have
\begin{eqnarray*}
\Phi_k(\m{x}_{k+1}) &=& f(\m{x}_k) + \nabla f(\m{x}_k)(\m{x}_{k+1}-\m{x}_k) +
\alpha \|\m{x}_{k+1} - \m{x}_k\|^2
+ \psi (\m{x}_{k+1}) \\
&\le& \Phi_k(\m{x}_k) = f(\m{x}_k) + \psi(\m{x}_k) .
\end{eqnarray*}
This is rearranged to obtain
\begin{eqnarray*}
\alpha \|\m{x}_{k+1} - \m{x}_k\|^2 &\le&
\nabla f(\m{x}_k)(\m{x}_{k}-\m{x}_{k+1}) +
\psi(\m{x}_k) - \psi (\m{x}_{k+1}) \\
&\le &
\nabla f(\m{x}_k)(\m{x}_{k}-\m{x}_{k+1}) +
\m{p}_k(\m{x}_k - \m{x}_{k+1}) ,
\end{eqnarray*}
where $\m{p}_k \in \partial \psi (\m{x}_k)$.
Taking norms yields
\begin{equation}\label{x_k+1_bound}
\|\m{x}_{k+1} - \m{x}_k\| \le (\|\m{g}_k\| + \|\m{p}_k\|)/\alpha .
\end{equation}
By Theorem 23.4 and Corollary 24.5.1 in \cite{Rockafellar70} and by
the compactness of $\C{L}$, there exists a constant $c$,
independent of $\m{x}_k \in \C{L}$, such that $\|\m{g}_k\| + \|\m{p}_k\| \le c$.
Consequently, we have
\[
\|\m{x}_{k+1} - \m{x}_k\| \le c/\alpha .
\]
Since $\C{K}$ is compact and $\C{L}$ lies in the interior
of $\C{K}$, the distance $\delta$ from $\C{L}$
to the boundary of $\C{K}$ is positive.
Choose $\beta \in (0, \infty)$ so that $c/\beta \le \delta$.
Hence, when $\alpha \ge \beta$, $\m{x}_{k+1} \in \C{K}$
since $\m{x}_k \in \C{L}$.

Let $\lambda$ denote the Lipschitz constant for $f$ on $\C{K}$ and
suppose that $\alpha \ge \beta$.
Since $\m{x}_k \in \C{L} \subset \C{K}$
and $\|\m{x}_{k+1} - \m{x}_k\| \le \delta$, we have $\m{x}_{k+1} \in \C{K}$.
Moreover, due to the convexity of $\C{K}$,
the line segment connecting $\m{x}_k$ and $\m{x}_{k+1}$ lies in $\C{K}$.
Proceeding as in \cite{wright09a}, a Taylor expansion around
$\m{x}_k$ yields
\[
f(\m{x}_{k+1}) \le f(\m{x}_k)  +
\nabla f (\m{x}_k) (\m{x}_{k+1} - \m{x}_k) +
.5\lambda\|\m{x}_{k+1} - \m{x}_k\|^2 .
\]
Adding $\psi(\m{x}_{k+1})$ to both sides, we have
\begin{eqnarray}
\phi(\m{x}_{k+1}) &\le& \Phi_k (\m{x}_{k+1}) +
(.5\lambda - \alpha)\|\m{x}_{k+1} - \m{x}_k\|^2 \label{phiPhi}\\
&\le& \Phi_k (\m{x}_{k}) + \left( .5\lambda - \alpha \right)
\|\m{x}_{k+1} - \m{x}_k\|^2 \nonumber \\
&=& \phi (\m{x}_{k}) + \left( .5 \lambda - \alpha \right)
\|\m{x}_{k+1} - \m{x}_k\|^2 \nonumber \\
&\le& \phi_k^R +
\left( .5\lambda - \alpha \right) \|\m{x}_{k+1} - \m{x}_k\|^2
\quad \mbox{since } \phi(\m{x}_k) \le \phi_k^R \nonumber \\
&\le& \phi_k^R
- \sigma \alpha \|\m{x}_{k+1} - \m{x}_k\|^2
\quad \mbox{if }
.5\lambda - \alpha \le -\sigma \alpha . \nonumber
\end{eqnarray}
Hence, the proposition holds with
\[
\bar{\alpha} = \max \; \left\{ \beta, \frac{\lambda}{2(1-\sigma)} \right\} .
\]
\end{proof}

\begin{remark}
\label{exist_phiR}
Suppose $\phi_k^R \le \phi(\m{x}_1)$.
In Step 2 of SpaRSA, $\m{x}_{k+1}$ is chosen so that
$\phi(\m{x}_{k+1}) \le \phi_k^R$.
Hence, there exists $\phi_{k+1}^R$ such that
$\phi(\m{x}_{k+1}) \le \phi_{k+1}^R \le \phi(\m{x}_1)$.
In other words, if the hypothesis
``$\phi (\m{x}_k) \le \phi_k^R \le \phi(\m{x}_1)$'' of Proposition
\ref{StepProposition} is satisfied at step $k$, then a choice for
$\phi_{k+1}^R$ exists which satisfies this hypothesis at step $k+1$.
\end{remark}

\begin{remark}
\label{GLL_OK}
We now show that the
GLL reference value $\phi_k^{\max}$ satisfies the
condition $\phi (\m{x}_k) \le \phi_k^R \le \phi(\m{x}_1)$ of
Proposition $\ref{StepProposition}$ for each $k$.
The condition $\phi_k^{\max} \ge \phi(\m{x}_k)$ is a trivial
consequence of the definition of $\phi_k^{\max}$.
Also, by the definition, we have $\phi_1^{\max} = \phi(\m{x}_1)$.
For $k \ge 1$, $\phi(\m{x}_{k+1}) \le \phi_k^{\max}$
according to Step~2 of SpaRSA.
Hence, $\phi_k^{\max}$ is a decreasing function of $k$.
In particular, $\phi_k^{\max} \le \phi_1^{\max} = \phi(\m{x}_1)$.
\end{remark}

\section{Convergence estimate for convex functions}
\label{convergence}
In this section we give a sublinear convergence estimate for the
error in the objective function value $\phi(\m{x}_k)$
assuming $f$ is convex and the assumptions of
Proposition \ref{StepProposition} hold.

By (A1) and (A2), (\ref{main}) has a solution $\m{x}^* \in \C{L}$ and
an associated objective function value $\phi^* := \phi(\m{x}^*)$.
The convergence of the objective function values to $\phi^*$
is a consequence of the analysis in \cite{wright09a}:
\begin{lemma}
\label{phi_k_converge}
If $\mbox{\rm (A1)}$ and $\mbox{\rm (A2)}$ hold and $\phi_k^R = \phi_k^{\max}$
for every $k$, then
\[
\lim_{k \rightarrow \infty} \phi(\m{x}_k) = \phi^*.
\]
\end{lemma}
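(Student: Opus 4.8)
The plan is to reduce the statement to the global convergence analysis of SpaRSA in \cite{wright09a} and then use convexity to identify the limiting objective value as the global minimum $\phi^*$. First I would record the structural facts. By Remark~\ref{GLL_OK} the reference values $\phi_k^{\max}$ form a nonincreasing sequence with $\phi(\m{x}_k)\le\phi_k^{\max}\le\phi(\m{x}_1)$ for all $k$, so every iterate stays in the level set $\C{L}$, a closed (hence compact) subset of $\C{K}$. Since $\phi$ is continuous ($f\in C^1$ on $\C{K}$, and $\psi$ convex and finite hence continuous) and $\C{L}$ is compact and contains $\m{x}_1$, a global minimizer $\m{x}^*\in\C{L}$ exists with value $\phi^*$, and $\phi^*\le\phi(\m{x}_k)$ for every $k$. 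Because $\phi_k^{\max}$ is nonincreasing and bounded below by $\phi^*$, it converges to some limit $V\ge\phi^*$; the whole task then reduces to showing $V=\phi^*$, after which the sandwich $\phi^*\le\phi(\m{x}_k)\le\phi_k^{\max}\to V=\phi^*$ finishes the proof.

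Next I would show that every accumulation point of $\{\m{x}_k\}$ is stationary. Proposition~\ref{StepProposition} guarantees the Step~2 line search terminates with an accepted steplength bounded above, $\alpha_k\le\max\{\alpha_{\max},\eta\bar{\alpha}\}$, and below, $\alpha_k\ge\alpha_{\min}>0$. Feeding the sufficient-decrease inequality $\phi(\m{x}_{k+1})\le\phi_k^{\max}-\sigma\alpha_k\|\m{x}_{k+1}-\m{x}_k\|^2$ into the windowed-maximum bookkeeping of Grippo--Lampariello--Lucidi, exactly as carried out for SpaRSA in \cite{wright09a}, yields $\lim_{k\to\infty}\|\m{x}_{k+1}-\m{x}_k\|=0$. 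The optimality condition for the Step~2 subproblem reads $\m{0}\in\nabla f(\m{x}_k)+2\alpha_k(\m{x}_{k+1}-\m{x}_k)\tr+\partial\psi(\m{x}_{k+1})$. Since $\alpha_k$ is bounded and $\m{x}_{k+1}-\m{x}_k\to\m{0}$, passing to the limit along any convergent subsequence $\m{x}_{k_j}\to\bar{\m{x}}$, using continuity of $\nabla f$ on $\C{K}$ together with outer semicontinuity (closed graph) of $\partial\psi$, gives $\m{0}\in\nabla f(\bar{\m{x}})+\partial\psi(\bar{\m{x}})$, so $\bar{\m{x}}$ is stationary.

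Convexity then identifies the value. Because $f$ and $\psi$ are convex, $\phi$ is convex, so any stationary point is a global minimizer; hence $\phi(\bar{\m{x}})=\phi^*$ for every accumulation point $\bar{\m{x}}$. To pin down $V$, write $\phi_k^{\max}=\phi(\m{x}_{\ell(k)})$ for an index $\ell(k)\ge k-M+1\to\infty$. The subsequence $\{\m{x}_{\ell(k)}\}\subset\C{L}$ admits a convergent sub-subsequence with limit $\bar{\m{x}}$, which is stationary and therefore satisfies $\phi(\bar{\m{x}})=\phi^*$; by continuity of $\phi$, $\phi(\m{x}_{\ell(k)})\to\phi^*$ along it. Since $\phi_k^{\max}$ converges, its limit equals that of any subsequence, so $V=\phi^*$, which completes the sandwich and gives $\lim_k\phi(\m{x}_k)=\phi^*$.

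The main obstacle is the step $\|\m{x}_{k+1}-\m{x}_k\|\to0$: because the line search is nonmonotone, $\phi(\m{x}_k)$ need not decrease, so one cannot telescope it directly and must instead track the windowed maximum $\phi_k^{\max}$ and the index $\ell(k)$ at which it is attained, propagating the decrease across blocks of length $M$. This is precisely the nonmonotone argument already established for SpaRSA in \cite{wright09a}, which is why the present lemma is stated as a consequence of that analysis; the only genuinely new ingredient is the convexity step upgrading ``stationary'' to ``globally optimal.'' A minor technical point to handle with care is the limit of the set-valued term $\partial\psi(\m{x}_{k+1})$, which requires outer semicontinuity of the subdifferential rather than pointwise continuity.
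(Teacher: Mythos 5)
Your proof is correct and follows essentially the same route as the paper's: invoke the SpaRSA global convergence analysis of \cite{wright09a} (nonincreasing $\phi_k^{\max}$, $\|\m{x}_{k+1}-\m{x}_k\|\to 0$, accumulation points are stationary) and then use convexity to upgrade stationarity to global optimality. The only cosmetic difference is that you derive convergence of $\phi(\m{x}_k)$ via the sandwich $\phi^*\le\phi(\m{x}_k)\le\phi_k^{\max}$ and the index $\ell(k)$ where the windowed maximum is attained, whereas the paper cites \cite[Lemma 4]{wright09a} directly for the existence of the limit $\bar{\phi}$.
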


\begin{proof}
By \cite[Lemma 4]{wright09a}, the objective function values
$\phi(\m{x}_k)$ approach a limit denoted $\bar{\phi}$.
By \cite[Theorem 1]{wright09a}, all accumulation points of the
iterates $\m{x}_k$ are stationary points.
An accumulation point exists since $\C{K}$ is compact and the
iterates are all contained in $\C{L} \subset \C{K}$,
as shown in Remark \ref{GLL_OK}.
Since $f$ and $\psi$ are both convex, a stationary
point is a global minimizer of $\phi$.
Hence, $\bar{\phi} = \phi^*$.
\end{proof}

Our sublinear convergence result is the following:
\bigskip

\begin{theorem}
\label{theorem_sublinear}
If $\mbox{\rm (A1)}$ and $\mbox{\rm (A2)}$ hold, $f$ is convex,
and $\phi_k^R = \phi_k^{\max}$ for all $k$,
then there exist constants $a$ and $b$ such that
\[
\phi (\m{x}_k) - \phi^* \le \frac{a}{b + k}
\]
for $k$ sufficiently large.
\end{theorem}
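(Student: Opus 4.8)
The plan is to combine a Beck--Teboulle-style one-step bound, which converts convexity of $\phi$ into a recursion for the squared distance to the minimizer $\m{x}^* \in \C{L}$, with a summation argument tailored to the nonmonotone reference value $\phi_k^{\max}$. First I would write the optimality condition for the Step~2 subproblem: since $\m{x}_{k+1}$ minimizes the strongly convex function $\Phi_k$, there is a subgradient $\m{p}_{k+1} \in \partial\psi(\m{x}_{k+1})$ with $\nabla f(\m{x}_k) + 2\alpha_k(\m{x}_{k+1}-\m{x}_k)\tr + \m{p}_{k+1} = \m{0}$, where $\alpha_k$ is the accepted step. Adding the convexity bound $f(\m{x}^*) \ge f(\m{x}_k) + \nabla f(\m{x}_k)(\m{x}^*-\m{x}_k)$ to the subgradient inequality $\psi(\m{x}^*) \ge \psi(\m{x}_{k+1}) + \m{p}_{k+1}(\m{x}^*-\m{x}_{k+1})$, eliminating $\m{p}_{k+1}$ via the optimality condition, applying the identity $2(\m{x}_{k+1}-\m{x}_k)\tr(\m{x}^*-\m{x}_{k+1}) = d_k - d_{k+1} - \|\m{x}_{k+1}-\m{x}_k\|^2$ with $d_k := \|\m{x}_k - \m{x}^*\|^2$, and invoking the Taylor/Lipschitz bound $f(\m{x}_{k+1}) \le f(\m{x}_k) + \nabla f(\m{x}_k)(\m{x}_{k+1}-\m{x}_k) + \tfrac12\lambda\|\m{x}_{k+1}-\m{x}_k\|^2$ already used in Proposition~\ref{StepProposition}, I expect to reach the one-step inequality
\[
\phi(\m{x}_{k+1}) - \phi^* \;\le\; \alpha_k\,(d_k - d_{k+1}) + \left(\tfrac12\lambda - \alpha_k\right)\|\m{x}_{k+1}-\m{x}_k\|^2 .
\]
Because every accepted iterate satisfies $\phi(\m{x}_{k+1}) \le \phi_k^{\max} \le \phi(\m{x}_1)$, it lies in $\C{L}\subset\C{K}$, so the Lipschitz bound applies along $[\m{x}_k,\m{x}_{k+1}]$ regardless of the size of $\alpha_k$; moreover the line search gives $\alpha_k \in [\alpha_{\min}, \hat\alpha]$ with $\hat\alpha := \max\{\alpha_{\max}, \eta\bar{\alpha}\}$, since Step~2 accepts as soon as $\alpha \ge \bar{\alpha}$.

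Next I would telescope. Dividing the one-step inequality by $\alpha_k$ removes the variation of the step size from the distance terms; summing from $1$ to $K$ and writing $e_k := \phi(\m{x}_k) - \phi^* \ge 0$ yields
\[
\frac{1}{\hat\alpha}\sum_{k=1}^{K} e_{k+1} \;\le\; d_1 + \frac{\lambda}{2\alpha_{\min}}\sum_{k=1}^{K}\|\m{x}_{k+1}-\m{x}_k\|^2 ,
\]
where the distance terms collapse to $d_1 - d_{K+1} \le d_1$ and the correction survives only on iterations with $\alpha_k < \tfrac12\lambda$. To bound the right side uniformly in $K$, I would use the sufficient-decrease condition $\phi(\m{x}_{k+1}) \le \phi_k^{\max} - \sigma\alpha_k\|\m{x}_{k+1}-\m{x}_k\|^2$ together with the monotonicity of $\phi_k^{\max}$ from Remark~\ref{GLL_OK} to show $\sum_k \|\m{x}_{k+1}-\m{x}_k\|^2 < \infty$, producing a constant $C'$ with $\sum_{k=1}^{\infty} e_k \le C'$.

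Finally I would convert this finite sum into a pointwise $1/k$ rate. In the monotone ISTA analysis one uses $e_k \le e_{k-1}$ to get $k\,e_k \le \sum e_i$; here $e_k$ need not decrease, so instead I would exploit that $E_k := \phi_k^{\max} - \phi^*$ is non-increasing and that, for $k \ge M$, $E_k$ equals $e_i$ for some index $i$ in the trailing window $\{k-M+1,\dots,k\}$. Taking the disjoint windows ending at $K, K-M, K-2M,\dots$ produces $\lfloor K/M\rfloor$ distinct indices $i_j$ with $e_{i_j} = E_{K-jM} \ge E_K$, so that $\sum_{i=1}^{K} e_i \ge \lfloor K/M\rfloor\,E_K$. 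Combining with $\sum_{i=1}^{K} e_i \le C'$ gives $E_K \le C'M/(K-M)$, and since $\phi(\m{x}_K) - \phi^* = e_K \le E_K$, the theorem follows with, for instance, $a = 2C'M$ and $b = 0$ for all $K \ge 2M$.

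The main obstacle is the nonmonotonicity of the line search, which intrudes in two ways. The milder one is the correction term $(\tfrac12\lambda - \alpha_k)\|\m{x}_{k+1}-\m{x}_k\|^2$: it is harmless whenever $\alpha_k \ge \tfrac12\lambda$ (giving the clean recursion $e_{k+1} \le \alpha_k(d_k - d_{k+1})$ and a monotone $d_k$), but since the accepted step can be smaller, it must be absorbed through the summability of $\sum\|\m{x}_{k+1}-\m{x}_k\|^2$. The more serious difficulty is the last step, because without a monotone objective the passage from $\sum e_i < \infty$ to $e_k = O(1/k)$ is not automatic; it is precisely the disjoint-window argument, riding on the non-increasing sequence $\phi_k^{\max}$, that recovers the per-iterate decay.
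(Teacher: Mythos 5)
Your one-step inequality
$\phi(\m{x}_{k+1}) - \phi^* \le \alpha_k(d_k - d_{k+1}) + (\tfrac12\lambda - \alpha_k)\|\m{x}_{k+1}-\m{x}_k\|^2$
is correct (and the domain issue is handled properly, since both endpoints lie in $\C{L}\subset\C{K}$), and your closing disjoint-window argument converting $\sum_i e_i \le C'$ into $e_K = O(M/K)$ is also sound. The gap is in the middle: the claim that the sufficient-decrease condition plus monotonicity of $\phi_k^{\max}$ yields $\sum_k \|\m{x}_{k+1}-\m{x}_k\|^2 < \infty$. The line search only gives $\sigma\alpha_{\min}\|\m{x}_{k+1}-\m{x}_k\|^2 \le \phi_k^{\max} - \phi(\m{x}_{k+1})$, and this right-hand side does \emph{not} telescope, because $\phi(\m{x}_{k+1})$ can sit far below $\phi_{k+1}^{\max}$. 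Writing $\phi_k^{\max} - \phi(\m{x}_{k+1}) = (\phi_k^{\max}-\phi_{k+1}^{\max}) + (\phi_{k+1}^{\max}-\phi(\m{x}_{k+1}))$, the first piece telescopes but the second only admits the bound $\phi_{k+1}^{\max}-\phi^*$, so the best you get is $\sum_k\|\m{s}_k\|^2 \lesssim e_1 + M\sum_k e_k$ --- circular, and with a constant far too large to absorb. Worse, both intermediate claims ($\sum_k\|\m{s}_k\|^2<\infty$ and hence $\sum_k e_k<\infty$) are strictly stronger than the $O(1/k)$ rate you are proving: the recursion the paper actually establishes, $e_i \le e_{i-1} - b_4 e_{i-1}^2$, is consistent with $e_i \sim 1/(b_4 i)$ exactly, for which $\sum_i e_i$ diverges. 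Within each window of length $M$ the GLL rule permits up to $M-1$ iterations whose decrease is measured against the (stale) window maximum, each contributing up to $E_i/(\sigma\alpha_{\min})$ to $\sum\|\m{s}_k\|^2$; with $E_i\sim 1/i$ this sum diverges, so square-summability genuinely can fail for the nonmonotone method even though it holds for monotone ISTA.

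The paper avoids this trap by never summing over all $k$. It derives a per-step inequality in function values, $\phi(\m{x}_k) \le (1-\lambda)\phi_{k-1}^R + \lambda\phi^* + b_k\lambda^2 + b_0\|\m{s}_k\|^2$, and then evaluates it \emph{only} at the window-maximizing index $k_i$, where the correction term obeys $\|\m{s}_{k_i}\|^2 \le (\phi_{i-1}-\phi_i)/(\sigma\alpha_{\min})$ --- a quantity that telescopes across windows and can be absorbed into the recursion for $e_i = \phi_i - \phi^*$, yielding $e_i \le e_{i-1} - b_4 e_{i-1}^2$ and hence $1/e_i \ge 1/e_{i-1} + b_4$. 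Your distance-based telescoping does not adapt to this windowing, since $d_{k_i} - d_{k_i+1}$ over nonconsecutive indices does not collapse. To repair your argument you would need either a genuinely new bound on the correction term summed over all iterations, or to switch, as the paper does, to a recursion driven by the window maxima of the objective rather than by the squared distances $d_k$.
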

\bigskip

\begin{proof}
By (\ref{phiPhi}) with $k+1$ replaced by $k$,
we have
\begin{equation}\label{h1}
\phi(\m{x}_k) \le \Phi_{k-1} (\m{x}_k) + b_0 \|\m{s}_k\|^2, \quad
b_0 = .5\lambda,
\end{equation}
where $\m{s}_k = \m{x}_k - \m{x}_{k-1}$.
Since $\m{x}_k$ minimizes $\Phi_{k-1}$ and $f$ is convex, it follows that
\begin{eqnarray}
\Phi_{k-1}(\m{x}_k) &=& \displaystyle \min_{\m{z} \in \mathbb{R}^n}
\{ f(\m{x}_{k-1}) + \nabla f (\m{x}_{k-1})(\m{z} - \m{x}_{k-1})
+ \alpha_{k-1} \|\m{z} - \m{x}_{k-1}\|^2 + \psi(\m{z}) \} \nonumber \\
&\le& \min \{ f(\m{z}) + \psi(\m{z})
+ \alpha_{k-1} \|\m{z} - \m{x}_{k-1}\|^2 :
\m{z} \in \mathbb{R}^n \} \nonumber \\
&=& \min \{ \phi(\m{z})
+ \alpha_{k-1} \|\m{z} - \m{x}_{k-1}\|^2 :
\m{z} \in \mathbb{R}^n \} , \label{h3}
\end{eqnarray}
where $\alpha_{k-1}$ is the terminating value of $\alpha$ at step $k-1$.
Combining (\ref{h1}) and (\ref{h3}) gives
\begin{equation}\label{h4}
\phi(\m{x}_k) \le \min \{ \phi(\m{z})
+ \bar{\beta} \|\m{z} - \m{x}_{k-1}\|^2 :
\m{z} \in \mathbb{R}^n \} +
b_0 \|\m{s}_{k}\|^2 ,
\end{equation}
where $\bar{\beta} = \eta \bar{\alpha}$
is an upper bound for the $\alpha_k$ implied
by Proposition \ref{StepProposition}.
By the convexity of $\phi$ and with
$\m{z} = (1-\lambda)\m{x}_{k-1} + \lambda \m{x}^*$ for any
$\lambda \in [0, 1]$, we have
\begin{eqnarray*}
\min_{\m{z} \in \mathbb{R}^n}
\phi(\m{z}) + \bar{\beta} \|\m{z} - \m{x}_{k-1}\|^2
&\le&
\phi(
(1-\lambda)\m{x}_{k-1} + \lambda \m{x}^*) +
\bar{\beta}\lambda^2\|\m{x}_{k-1} - \m{x}^*\|^2 \\
&\le&
(1-\lambda ) \phi(\m{x}_{k-1}) + \lambda \phi^*
+ \bar{\beta}\lambda^2\|\m{x}_{k-1} - \m{x}^*\|^2 \\
&=&
(1-\lambda ) \phi(\m{x}_{k-1}) + \lambda \phi^*
+ b_k \lambda^2 ,
\end{eqnarray*}
where $b_k = \bar{\beta} \|\m{x}_{k-1} - \m{x}^*\|^2$.
Combining this with (\ref{h4}) yields
\begin{eqnarray}
\phi(\m{x}_k) &\le& (1-\lambda) \phi(\m{x}_{k-1})
+ \lambda \phi^* + b_k \lambda^2
+ b_0 \| \m{s}_k\|^2 \nonumber \\
&\le& (1-\lambda)\phi_{k-1}^R + \lambda \phi^* + b_k\lambda^2
+ b_0 \|\m{s}_k \|^2 \label{h5}
\end{eqnarray}
for any $\lambda \in [0,1]$.
Define
\begin{equation}\label{phi}
\phi_i = \max \{ \phi (\m{x}_k) : (i-1)M < k \le iM \} = \phi_{iM}^R ,
\end{equation}
and let $k_i$ denote the index $k$ where the maximum is attained.
Since $\phi(\m{x}_{k+1}) \le \phi_k^R$ in Step 2 of SpaRSA,
it follows that $\phi_k^R = \phi_k^{\max}$ is a nonincreasing function of $k$.
By (\ref{h5}) with $k = k_i$ and by the monotonicity of $\phi_k^R$,
we have
\begin{equation}\label{phi_i}
\phi_i \le (1-\lambda)\phi_{i-1} + \lambda \phi^*
+ b_{k_i} \lambda^2 + b_0 \|\m{s}_{k_i}\|^2
\end{equation}
for any $\lambda \in [0,1]$.
Since both $\m{x}_{k-1}$ and $\m{x}^*$ lie in $\C{L}$, it follows that
\begin{equation}\label{b_k}
b_k = \bar{\beta} \|\m{x}_{k-1} - \m{x}^*\|^2 \le
\bar{\beta} (\mbox{diameter of }\C{L})^2 := b_2 < \infty .
\end{equation}
Step 2 of SpaRSA implies that
\[
\|\m{s}_k\|^2 \le (\phi_{k-1}^R - \phi(\m{x}_k))/b_1
\]
where $b_1 = \sigma \alpha_{\min}$.
We take $k = k_i$ and again exploit the monotonicity of
$\phi_k^R$ to obtain
\begin{equation}\label{s_k_i}
\|\m{s}_{k_i}\|^2 \le (\phi_{i-1} - \phi_i)/b_1 .
\end{equation}
Combining (\ref{phi_i})--(\ref{s_k_i}) gives
\begin{equation}\label{phi_no_sk}
\phi_i \le (1-\lambda)\phi_{i-1} + \lambda \phi^*
+ b_2 \lambda^2 + b_3 (\phi_{i-1} - \phi_i), \quad
b_3 = b_0/b_1,
\end{equation}
for every $\lambda \in [0, 1]$,
The minimum on the right side is attained with the choice
\begin{equation}\label{lambda_min}
\lambda = \min \left\{1, \frac{\phi_{i-1} - \phi^*}{2b_2} \right\}.
\end{equation}
As a consequence of Lemma \ref{phi_k_converge},
$\phi_{i-1}$ converges to $\phi^*$.
Hence, the minimizing $\lambda$ also approaches 0 as $i$ tends to $\infty$.
Choose $k$ large enough that the minimizing $\lambda$ is less than 1.
It follows from (\ref{phi_no_sk}) that for this minimizing choice of
$\lambda$, we have
\begin{equation}\label{h6}
\phi_i \le \phi_{i-1}
- \frac{(\phi_{i-1} - \phi^*)^2}{4b_2} + b_3 (\phi_{i-1} - \phi_i) .
\end{equation}
Define $e_i = \phi_i - \phi^*$.
Subtracting $\phi^*$ from each side of (\ref{h6}) gives
\begin{eqnarray*}
e_i &\le& e_{i-1}  - e_{i-1}^2/(4b_2) + b_3 (e_{i-1} - e_i) \\
&=& (1+b_3)e_{i-1} - e_{i-1}^2/(4b_2) - b_3 e_i .
\end{eqnarray*}
We arrange this to obtain
\begin{equation}\label{h7}
e_i \le e_{i-1} - b_4 e_{i-1}^2 \quad \mbox{where }
b_4 = \frac{1}{4b_2(1+b_3)} .
\end{equation}
By (\ref{h7}) $e_i \le e_{i-1}$, which implies that
\[
e_i \le e_{i-1} - b_4 e_{i-1}e_i \quad \mbox{or} \quad
e_i \le \frac{e_{i-1}}{1+b_4 e_{i-1}} .
\]
We form the reciprocal of this last inequality to obtain
\[
\frac{1}{e_i} \ge \frac{1}{e_{i-1}} + b_4 .
\]
Applying this inequality recursively gives
\[
\frac{1}{e_i} \ge \frac{1}{e_{j}} + (i-j)b_4 \quad \mbox{or} \quad
e_i \le \frac{e_j}{1 + (i-j)b_4 e_j} ,
\]
where $j$ is chosen large enough to ensure that the
minimizing $\lambda$ in (\ref{lambda_min})
is less than 1 for all $i \ge j$.

Suppose that $k \in ((i-1)M, iM]$ with $i > j$.
Since $i \ge k/M$, we have
\[
\phi(\m{x}_k) - \phi^* \le e_i \le
\frac{e_j}{1 + (i-j)b_4 e_j} \le \frac{e_j}{1 - jb_4e_j + k b_4e_j/M} \; .
\]
The proof is completed by taking
$a = M/b_4$ and $b = M/(b_4e_j) - Mj$.
\end{proof}

\section{Convergence estimate for strongly convex functions}
\label{strong_convexity}
In this section we prove that SpaRSA converges R-linearly
when $f$ is a convex function and $\phi$ satisfies
\begin{equation}\label{StrongConvexity}
\phi (\m{y}) \ge \phi(\m{x}^*) + \mu \|\m{y} - \m{x}^*\|^2
\end{equation}
for all $\m{y} \in \mathbb{R}^n$, where $\mu > 0$.
Hence, $\m{x}^*$ is a unique minimizer of $\phi$.
For example, if $f$ is a strongly convex function, then
(\ref{StrongConvexity}) holds.
\bigskip

\begin{theorem}
\label{theorem_linear}
If $\mbox{\rm (A1)}$ and $\mbox{\rm (A2)}$ hold,
$f$ is convex, $\phi$ satisfies $(\ref{StrongConvexity})$,
and $\phi_k^R = \phi_k^{\max}$ for every $k$,
then there exist constants $\theta \in (0,1)$ and $c$ such that
\begin{equation}\label{linear_convergence}
\phi (\m{x}_k) - \phi^* \le
c\theta^k (\phi(\m{x}_1) - \phi^*)
\end{equation}
for every $k$.
\end{theorem}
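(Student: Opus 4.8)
The plan is to reuse the machinery already developed for the sublinear case, but exploit strong convexity \eqref{StrongConvexity} to upgrade the one-step recursion from the quadratic form \eqref{h7} to a genuine linear contraction. The starting point is inequality \eqref{phi_no_sk}, which holds verbatim here since it only used convexity of $f$ and the line-search guarantees; it reads
\[
\phi_i \le (1-\lambda)\phi_{i-1} + \lambda \phi^*
+ b_2 \lambda^2 + b_3(\phi_{i-1} - \phi_i)
\]
for all $\lambda \in [0,1]$. The key new ingredient is that \eqref{StrongConvexity} lets me control the constant $b_k = \bar\beta\|\m{x}_{k-1}-\m{x}^*\|^2$ appearing in \eqref{phi_i} not by the fixed diameter bound \eqref{b_k}, but by the error itself: from $\phi(\m{x}_{k-1}) - \phi^* \ge \mu\|\m{x}_{k-1}-\m{x}^*\|^2$ I get $b_k \le (\bar\beta/\mu)(\phi(\m{x}_{k-1})-\phi^*)$, and after passing to the cyclic maxima this gives $b_{k_i} \le (\bar\beta/\mu)\,e_{i-1}$ where $e_i = \phi_i - \phi^*$.

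With this sharper bound I would redo the derivation of \eqref{phi_no_sk}, replacing the term $b_2\lambda^2$ by $(\bar\beta/\mu)e_{i-1}\lambda^2$. Subtracting $\phi^*$ throughout and writing everything in terms of $e_i$ yields a recursion of the form
\[
(1+b_3)e_i \le (1-\lambda + b_3)e_{i-1} + (\bar\beta/\mu)\,e_{i-1}\lambda^2 .
\]
Now, crucially, $\lambda$ is a free parameter in $[0,1]$, so I can simply \emph{choose} a single fixed $\lambda = \bar\lambda \in (0,1]$ rather than the error-dependent minimizer used before. For any fixed $\bar\lambda$ the right side is a constant multiple of $e_{i-1}$, namely $\bigl(1 - \bar\lambda + b_3 + (\bar\beta/\mu)\bar\lambda^2\bigr)e_{i-1}$, so dividing by $(1+b_3)$ gives $e_i \le \rho\, e_{i-1}$ with
\[
\rho = \frac{1 - \bar\lambda + b_3 + (\bar\beta/\mu)\bar\lambda^2}{1+b_3}.
\]
By choosing $\bar\lambda$ small enough (the $-\bar\lambda$ term dominates the $\bar\lambda^2$ term for small $\bar\lambda$) one makes the numerator strictly less than $1+b_3$, so $\rho \in (0,1)$. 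Iterating the contraction $e_i \le \rho\,e_{i-1}$ gives $e_i \le \rho^{\,i}e_0$, i.e.\ geometric decay of the cyclic maxima.

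The last step is bookkeeping to convert decay in the cycle index $i$ into decay in $k$, exactly as at the end of Theorem \ref{theorem_sublinear}. For $k \in ((i-1)M, iM]$ one has $\phi(\m{x}_k) - \phi^* \le e_i \le \rho^{\,i}e_0$, and since $i \ge k/M$ and $\rho < 1$ this yields $\phi(\m{x}_k)-\phi^* \le e_0\,\rho^{\,k/M} = e_0\,\theta^{k}$ with $\theta = \rho^{1/M} \in (0,1)$; absorbing the ratio $e_0/(\phi(\m{x}_1)-\phi^*)$ and any slack into the constant $c$ gives \eqref{linear_convergence}. I expect the main obstacle to be the very first move: making the contraction valid for \emph{every} $k$ (as the statement demands) rather than merely for $k$ large. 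In the sublinear proof the restriction ``$k$ sufficiently large'' entered precisely because the minimizing $\lambda$ had to be less than $1$; here, by committing to a fixed admissible $\bar\lambda\in(0,1]$ from the outset, that restriction disappears, but I must verify that \eqref{phi_no_sk} and the relation $b_k \le (\bar\beta/\mu)e_{i-1}$ genuinely hold for all $i\ge 1$ (using $\phi_0^R = \phi(\m{x}_1)$ and Remark \ref{GLL_OK}), and that $\rho$ can be made less than one independently of the cycle — which reduces to the single scalar inequality $(\bar\beta/\mu)\bar\lambda^2 < \bar\lambda$, i.e.\ $\bar\lambda < \mu/\bar\beta$, trivially satisfiable.
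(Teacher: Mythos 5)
Your proof is correct, and it reaches the key contraction $e_i \le \rho\, e_{i-1}$ by a genuinely different route from the paper's. The paper splits into two cases according to whether $\|\m{s}_{k_i}\|^2$ is large or small relative to $e_{i-1} = \phi_{i-1}-\phi^*$: when the step is large, the sufficient-decrease bound (\ref{s_k_i}) alone forces $e_i \le (1-b_1c_1)e_{i-1}$; when it is small, the term $b_0\|\m{s}_{k_i}\|^2$ is absorbed as $b_0c_1e_{i-1}$ and a fixed $\lambda=\min\{1,1/(2b_5)\}$ is used, with the threshold $c_1$ tuned in (\ref{c1}) so that both branches contract. You instead always bound $b_0\|\m{s}_{k_i}\|^2 \le b_3(\phi_{i-1}-\phi_i)$ and move $b_3e_i$ to the left-hand side---exactly the device already used to obtain (\ref{h7}) in the sublinear proof---so that the only remaining error term in (\ref{phi_i}) is $b_5e_{i-1}\lambda^2$, where the strong-convexity bound (\ref{bk}) holds unconditionally (it does not depend on the paper's Case 2 hypothesis). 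A fixed $\bar\lambda=\min\{1,1/(2b_5)\}$ then yields $\rho = 1 - (\bar\lambda - b_5\bar\lambda^2)/(1+b_3) \in (0,1)$. This eliminates both the case analysis and the auxiliary constant $c_1$, and it makes transparent why the ``$k$ sufficiently large'' restriction of Theorem \ref{theorem_sublinear} disappears here: $\lambda$ is a fixed admissible value rather than the error-dependent minimizer (\ref{lambda_min}). The only loose end is the bookkeeping at $i=1$: your $e_0$ is not defined by (\ref{phi}), and $\m{s}_{k_1}$ is undefined if $k_1=1$; starting the recursion at $i=2$ and using $e_1 \le \phi(\m{x}_1)-\phi^*$, as the paper does with its $\gamma^{i-1}e_1$, closes this.
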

\bigskip

\begin{proof}
Let $\phi_i$ be defined as in (\ref{phi}).
We will show that there exist $\gamma \in (0,1)$ such that
\begin{equation}\label{linear_bound}
\phi_i - \phi^* \le \gamma (\phi_{i-1} - \phi^*) .
\end{equation}
Let  $c_1$ be chosen to satisfy the inequality
\begin{equation}\label{c1}
0 < c_1 < \min \left\{ \frac{1}{2b_0}, \frac{\mu}{4b_0 \bar{\beta}} \right\} .
\end{equation}
We consider 2 cases.

{\it Case} 1. $\|\m{s}_{k_i}\|^2 \ge c_1 (\phi_{i-1} - \phi^*)$.

\noindent
By (\ref{s_k_i}), we have
\[
c_1 (\phi_{i-1} - \phi^*) \le (\phi_{i-1} - \phi_i)/b_1 .
\]
This can be rearranged to obtain
\[
\phi_i - \phi^* \le (1-b_1 c_1) (\phi_{i-1} - \phi^*) ,
\]
which yields (\ref{linear_bound}).

{\it Case} 2. $\|\m{s}_{k_i}\|^2 < c_1 (\phi_{i-1} - \phi^*)$.

\noindent
We utilize the inequality (\ref{phi_i}) but with different bounds for
the $b_{k_i}$ and $\m{s}_{k_i}$ terms.
For $k \in ((i-1)M, iM]$, we have
\begin{eqnarray}
b_k := \bar{\beta} \|\m{x}_{k-1} - \m{x}^*\|^2
&\le& \frac{\bar{\beta}}{\mu} (\phi(\m{x}_{k-1}) - \phi^*)
\le \frac{\bar{\beta}}{\mu} (\phi_{k-1}^R - \phi^*) \nonumber \\
&\le& \frac{\bar{\beta}}{\mu} (\phi_{(i-1)M}^R - \phi^*)
= b_5 (\phi_{i-1} - \phi^*) , \quad b_5 = \frac{\bar{\beta}}{\mu}.
\nonumber
\end{eqnarray}
The first inequality is due to (\ref{StrongConvexity})
and the last inequality is since $\phi_k^R$ is monotone decreasing.
By the definition of $k_i$ below (\ref{phi}), it follows that
$k_i \in ((i-1)M, iM]$ and
\begin{equation}\label{bk}
b_{k_i} \le b_5 (\phi_{i-1} - \phi^*) .
\end{equation}
Inserting in (\ref{phi_i}) the bound (\ref{bk}) and the Case 2 requirement
$\|\m{s}_{k_i}\|^2 < c_1 (\phi_{i-1} - \phi^*)$ yields
\[
\phi_i \le (1-\lambda)\phi_{i-1} + \lambda \phi^*
+ b_5 (\phi_{i-1} - \phi^*) \lambda^2 +
b_0c_1 (\phi_{i-1} - \phi^*)
\]
for all $\lambda \in [0,1]$.
Subtract $\phi^*$ from each side to obtain
\begin{equation}\label{e_i}
e_i \le [1+b_0c_1 - \lambda + b_5 \lambda^2] e_{i-1}
\end{equation}
for all $\lambda \in [0,1]$.

The $\lambda \in [0,1]$ which minimizes the coefficient of $e_{i-1}$
in (\ref{e_i}) is
\[
\lambda = \min \left\{ 1, \frac{1}{2b_5} \right \} .
\]
If the minimizing $\lambda$ is 1, then $b_5 \le 1/2$ and the
minimizing coefficient in (\ref{e_i}) is
\[
\gamma = b_0 c_1 + b_5 \le b_0c_1 + 1/2 < 1
\]
since $c_1 < 1/(2b_0)$ by (\ref{c1}).
On the other hand, if the minimizing $\lambda$ is less than 1,
then $b_5 > 1/2$ and the minimizing coefficient is
\[
\gamma = 1 + b_0 c_1 - \frac{1}{4b_5}  < 1
\]
since $1/(4 b_5) = \mu/(4 \bar{\beta}) > b_0 c_1$ by (\ref{c1}).
This completes the proof of (\ref{linear_bound}).

For $k \in ((i-1)M, iM]$, we have
\[
\phi(\m{x}_k) - \phi^* \le e_i \le \gamma^{i-1} e_1 \le
\frac{1}{\gamma} \left( \gamma^{1/M} \right)^k (\phi(\m{x}_1) - \phi^*) .
\]
Hence, (\ref{linear_convergence}) holds with $c = 1/\gamma$ and
$\theta = \gamma^{1/M}$.
This completes the proof.
\end{proof}
\smallskip

\begin{remark}
The condition $(\ref{StrongConvexity})$ when combined with
(\ref{linear_convergence}) shows that the iterates
$\m{x}_k$ converge R-linearly to $\m{x}^*$.
\end{remark}

\section{More general reference function values}
\label{MSparsa}

The GLL reference function value $\phi_k^{\max}$,
defined in (\ref{GLL}),
often leads to greater efficiency when $M > 1$, when compared to
the monotone choice $M = 1$.
In practice, it is found that even more flexibility
in the reference function value can further accelerate convergence.
In \cite{hz05a} we prove convergence
of the nonmonotone gradient projection method whenever the
reference function $\phi_k^R$ satisfies the following conditions:
\smallskip

\begin{itemize}
\item[(R1)]
$\phi_1^R = \phi(\m{x}_1)$.
\item[(R2)]
$\phi (\m{x}_k) \le \phi_k^R \le \max\{\phi_{k-1}^R, \phi_k^{\max}\}$
for each $k > 1$.
\item[(R3)]
$\phi_k^R \le \phi_k^{\max}$ infinitely often.
\end{itemize}
\smallskip

In \cite{hz05a} we provide a specific choice for $\phi_k^R$
which satisfies (R1)--(R3) and which gave more rapid convergence
than the choice $\phi_k^R = \phi_k^{\max}$.
To satisfy (R3), we could choose an integer $L > 0$ and simply
set $\phi_k^R = \phi_k^{\max}$ every $L$ iterations.
Another strategy, closer in spirit to what is used in the
numerical experiments, is to choose a decrease parameter $\Delta > 0$
and set $\phi_k^R = \phi_k^{\max}$ if
$\phi(\m{x}_{k-L}) - \phi(\m{x}_k) \le \Delta$.
We now give convergence results for SpaRSA
whenever the reference function value satisfies (R1)--(R3).
In the first convergence result which follows,
convexity of $f$ is not required.
\smallskip

\begin{theorem} \label{liminf}
If $\mbox{\rm (A1)}$ and $\mbox{\rm (A2)}$ hold
and the reference function value $\phi_k^R$ satisfies
{\rm (R1)}--{\rm (R3)},
then the iterates $\m{x}_k$ of SpaRSA have a subsequence converging
to a limit $\bar{\m{x}}$ satisfying $\m{0} \in \partial \phi(\bar{\m{x}})$.
\end{theorem}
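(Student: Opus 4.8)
The plan is to reduce the theorem to the statement $\liminf_{k}\|\m{s}_k\| = 0$, where $\m{s}_k = \m{x}_k - \m{x}_{k-1}$, and then to pass to the limit in the optimality condition that defines $\m{x}_{k+1}$. First I would record the standing facts. Arguing by induction with (R1), (R2) and Proposition~\ref{StepProposition}, together with the observation (as in Remark~\ref{GLL_OK}) that $\phi_k^{\max}\le\phi(\m{x}_1)$ whenever the relevant iterates lie in $\C{L}$, one shows that $\phi(\m{x}_k)\le\phi_k^R\le\phi(\m{x}_1)$ for every $k$, so that all iterates remain in $\C{L}\subset\C{K}$, the line search terminates finitely, and the accepted step satisfies $\alpha_k\in[\alpha_{\min},\bar{\beta}]$ with $\bar{\beta}=\eta\bar{\alpha}$. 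In particular the sufficient decrease condition of Step~2 gives $\phi(\m{x}_{k+1})\le\phi_k^R-b_1\|\m{s}_{k+1}\|^2$ with $b_1=\sigma\alpha_{\min}$, and $\phi$ is bounded below on the compact set $\C{K}$.

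The heart of the argument is to prove $\liminf_k\|\m{s}_k\|=0$, and here I would argue by contradiction: suppose $\|\m{s}_k\|\ge\epsilon>0$ for all $k\ge K_0$, so that $\phi(\m{x}_{k+1})\le\phi_k^R-\delta$ with $\delta=b_1\epsilon^2>0$. Set $\mathcal{M}_k=\max\{\phi_{k-M+1}^R,\ldots,\phi_k^R\}$. Using $\phi(\m{x}_i)\le\phi_{i-1}^R$ in the definition of $\phi_k^{\max}$ gives $\phi_k^{\max}\le\mathcal{M}_{k-1}$, and then (R2) yields $\phi_{k+1}^R\le\mathcal{M}_k$, so $\mathcal{M}_k$ is nonincreasing; being bounded below it converges. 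The sharper bound $\phi(\m{x}_i)\le\phi_{i-1}^R-\delta$ gives $\phi_k^{\max}\le\mathcal{M}_{k-1}-\delta$, and by (R3) there are infinitely many indices $k$ with $\phi_k^R\le\phi_k^{\max}\le\mathcal{M}_{k-1}-\delta$. For each such $k$ I would show, by induction on $m=0,1,\ldots,M-1$, that $\phi_{k+m}^R\le\mathcal{M}_{k-1}-\delta$: the case $m=0$ is the displayed bound, and the inductive step combines $\phi_{k+m}^R\le\max\{\phi_{k+m-1}^R,\phi_{k+m}^{\max}\}$ with the estimate $\phi(\m{x}_i)\le\phi_{i-1}^R-\delta\le\mathcal{M}_{k-1}-\delta$ applied to each index $i$ in the window defining $\phi_{k+m}^{\max}$, splitting into $i\le k$ (where $\phi_{i-1}^R\le\mathcal{M}_{k-1}$) and $i>k$ (where the inductive hypothesis applies). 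Consequently $\mathcal{M}_{k+M-1}\le\mathcal{M}_{k-1}-\delta$ at each (R3)-trigger; passing to a subsequence of triggers spaced at least $M$ apart and using the monotonicity of $\mathcal{M}_k$ forces $\mathcal{M}_k\to-\infty$, contradicting the lower bound on $\phi$.

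With $\liminf_k\|\m{s}_k\|=0$ in hand, I would choose a subsequence along which $\|\m{s}_{k+1}\|\to 0$, and then, by compactness of $\C{L}$, refine it so that $\m{x}_k\to\bar{\m{x}}$; since $\|\m{s}_{k+1}\|\to 0$ we also have $\m{x}_{k+1}\to\bar{\m{x}}$. The minimizer $\m{x}_{k+1}$ in Step~2 satisfies the optimality condition $\m{0}\in\nabla f(\m{x}_k)+2\alpha_k(\m{x}_{k+1}-\m{x}_k)\tr+\partial\psi(\m{x}_{k+1})$, so there is $\m{p}_{k+1}\in\partial\psi(\m{x}_{k+1})$ with $\m{p}_{k+1}=-\nabla f(\m{x}_k)-2\alpha_k(\m{x}_{k+1}-\m{x}_k)\tr$. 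Along the subsequence $\alpha_k$ is bounded, so the second term vanishes, and continuity of $\nabla f$ gives $\m{p}_{k+1}\to-\nabla f(\bar{\m{x}})$. Closedness of the graph of the subdifferential of the finite convex function $\psi$ then yields $-\nabla f(\bar{\m{x}})\in\partial\psi(\bar{\m{x}})$, i.e. $\m{0}\in\nabla f(\bar{\m{x}})+\partial\psi(\bar{\m{x}})=\partial\phi(\bar{\m{x}})$.

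The main obstacle I anticipate is the windowed-maximum argument in the second paragraph: reconciling the general reference value obeying only (R1)--(R3) with the GLL quantity $\phi_k^{\max}$, and in particular proving that a single (R3)-trigger propagates a decrease of $\delta$ across a full window of $M$ reference values. The bookkeeping in the induction (tracking which indices in the sliding window are controlled by the hypothesis $\phi(\m{x}_i)\le\phi_{i-1}^R-\delta$ versus by the inductive hypothesis) is the delicate part; everything else is either compactness or a routine limit in the optimality condition.
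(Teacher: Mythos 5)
Your proposal is correct, and its skeleton is identical to the paper's: the same induction with (R1), (R2) and Proposition~\ref{StepProposition} to keep the iterates in $\C{L}$ and guarantee finite termination of the line search, the reduction to $\liminf_k\|\m{s}_k\|=0$, and the same passage to the limit in the optimality condition $\m{0}\in\nabla f(\m{x}_{k})+2\alpha_{k}(\m{x}_{k+1}-\m{x}_{k})\tr+\partial\psi(\m{x}_{k+1})$ using the uniform bound $\alpha_k\le\bar{\beta}=\eta\bar{\alpha}$ and the closedness of the graph of $\partial\psi$ (the paper invokes Corollary~24.5.1 of \cite{Rockafellar70} for this). The one genuine difference is the middle step: the paper treats $\liminf_k\|\m{s}_k\|=0$ as a black box, citing the third paragraph of the proof of Theorem~2.2 in \cite{hz05a}, whereas you supply a self-contained contradiction argument via the windowed maximum $\mathcal{M}_k=\max\{\phi_{k-M+1}^R,\ldots,\phi_k^R\}$. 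That argument checks out: $\phi(\m{x}_i)\le\phi_{i-1}^R$ together with (R1) gives $\phi_k^{\max}\le\mathcal{M}_{k-1}$ and hence, via (R2), the monotonicity of $\mathcal{M}_k$; under the contradiction hypothesis each (R3)-trigger propagates a fixed decrease $\delta=\sigma\alpha_{\min}\epsilon^2$ across the next $M$ reference values (your case split on whether $i-1$ falls in the window defining $\mathcal{M}_{k-1}$ or is covered by the inductive hypothesis is exactly the right bookkeeping, since $i-1\ge k+m-M\ge k-M$ in the first case), which drives $\mathcal{M}_k\to-\infty$ and contradicts the boundedness of $\phi$ below on $\C{K}$. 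What your route buys is a proof that does not depend on the external reference; what the paper's route buys is brevity. The step you flagged as delicate is indeed the only nontrivial content beyond the citation, and you have handled it correctly.
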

\smallskip

\begin{proof}
We first apply Proposition \ref{StepProposition}
to show that Step~2 of SpaRSA is fulfilled for some choice of $j$.
This requires that we show $\phi_k^R \le \phi(\m{x}_1)$ for each $k$.
This holds for $k = 1$ by (R1).
Also, for $k = 1$, we have $\phi_1^{\max} = \phi(\m{x}_1)$.
Proceeding by induction, suppose that
$\phi_i^R \le \phi(\m{x}_1)$ and $\phi_i^{\max} \le \phi (\m{x}_1)$
for $i = 1$, 2, $\ldots$, $k$.
By Proposition \ref{StepProposition}, Step 2 of SpaRSA terminates
at a finite $j$ and hence,
\[
\phi(\m{x}_{k+1}) \le \phi_k^R \le \phi (\m{x}_1) .
\]
It follows that $\phi_{k+1}^{\max} \le \phi(\m{x}_1)$ and
$\phi_{k+1}^R \le \max \{\phi_k^R, \phi_{k+1}^{\max} \} \le$
$\phi(\m{x}_1)$.
This completes the induction step, and hence,
by Proposition \ref{StepProposition}, it follows that in every
iteration, Step~2 of SpaRSA is fulfilled for a finite $j$.

By Step 2 of SpaRSA, we have
\[
\phi(\m{x}_k) \le \phi_{k-1}^R - \sigma \alpha_{\min}\|\m{s}_{k}\|^2 ,
\]
where $\m{s}_{k} = \m{x}_k - \m{x}_{k-1}$.
In the third paragraph of the proof of Theorem 2.2 in \cite{hz05a},
it is shown that when an inequality of this form is satisfied for
a reference function value satisfying (R1)--(R3), then
\[
\lim \inf_{k\rightarrow \infty} \|\m{s}_k\| = 0 .
\]

Let $k_i$ denote a strictly increasing sequence with the property that
$\m{s}_{k_i}$ tends to $\m{0}$ and $\m{x}_{k_i}$ approaches
a limit denoted $\bar{\m{x}}$.
That is,
\[
\lim_{i\rightarrow \infty} \m{s}_{k_i} = 0  \quad \mbox{and} \quad
\lim_{i\rightarrow \infty} \m{x}_{k_i} = \bar{\m{x}} .
\]
Since $\m{s}_{k_i}$ tends to $\m{0}$, it follows that
$\m{x}_{k_i - 1}$ also approaches $\bar{\m{x}}$.
By the first-order optimality conditions for $\m{x}_{k_i}$, we have
\begin{equation}\label{optimality}
\m{0} \in \nabla f(\m{x}_{k_i - 1}) +
2\alpha_{k_i}(\m{x}_{k_i} - \m{x}_{k_i-1}) +
\partial \psi (\m{x}_{k_i}) ,
\end{equation}
where $\alpha_{k_i}$ denotes the value of $\alpha$ in Step 2
of SpaRSA associated with $\m{x}_{k_i}$.
Again, by Proposition \ref{StepProposition},
we have the uniform bound
$\alpha_{k_i} \le \bar{\beta} = \eta \bar{\alpha}$.
Taking the limit as $i$ tends to $\infty$,
it follows from Corollary 24.5.1 in \cite{Rockafellar70} that
\[
\m{0} \in \nabla f(\bar{\m{x}}) + \partial \psi (\bar{\m{x}}).
\]
This completes the proof.
\end{proof}

With a small change in (R3), we obtain either sublinear or linear
convergence of the entire iteration sequence.

\begin{theorem} \label{linear_R}
Suppose that $\mbox{\rm (A1)}$ and $\mbox{\rm (A2)}$ hold, $f$ is convex,
the reference function value $\phi_k^R$ satisfies
{\rm (R1)} and {\rm (R2)}, and there is $L > 0$ with the
property that for each $k$,
\begin{equation}\label{kL}
\phi_j^R \le \phi_j^{\max} \quad \mbox{for some } j \in [k, k+L) .
\end{equation}
Then there exist constants $a$ and $b$ such that
\[
\phi (\m{x}_k) - \phi^* \le \frac{a}{b + k}
\]
for $k$ sufficiently large.
Moreover, if $\phi$ satisfies the strong convexity condition
$(\ref{StrongConvexity})$,
then there exists $\theta \in (0,1)$ and $c$ such that
\[
\phi (\m{x}_k) - \phi^* \le
c\theta^k (\phi(\m{x}_1) - \phi^*)
\]
for every $k$.
\end{theorem}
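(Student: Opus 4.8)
The plan is to reduce this theorem to the two results already established for the choice $\phi_k^R = \phi_k^{\max}$, namely Theorem \ref{theorem_sublinear} and Theorem \ref{theorem_linear}, by isolating a subsequence of iterations along which their algebra replays essentially verbatim. First I would record the consequences that need only (R1), (R2), and the spacing hypothesis (\ref{kL}). Since (\ref{kL}) forces $\phi_j^R \le \phi_j^{\max}$ to occur at least once in every window of length $L$, it implies (R3), so the induction in the proof of Theorem \ref{liminf} applies unchanged: Step 2 terminates for a finite $j$ at every iteration, all iterates stay in $\C{L} \subset \C{K}$, the stepsizes obey the uniform bound $\alpha_k \le \bar{\beta} = \eta\bar{\alpha}$, and $\phi_k^R \le \phi(\m{x}_1)$, $\phi_k^{\max} \le \phi(\m{x}_1)$ for all $k$. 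In particular the descent inequality behind (\ref{h1})--(\ref{h5}) and the step bound $\|\m{s}_k\|^2 \le (\phi_{k-1}^R - \phi(\m{x}_k))/b_1$ hold at every $k$, exactly as in the proofs of Theorems \ref{theorem_sublinear} and \ref{theorem_linear}.

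The one ingredient those proofs used that is now unavailable is the monotonicity of $\phi_k^R = \phi_k^{\max}$. I would replace it by the following delayed-monotonicity claim: if $n$ is a \emph{good} index, meaning $\phi_n^R \le \phi_n^{\max}$, then $\phi_k^R \le \phi_n^{\max}$ and $\phi_k^{\max} \le \phi_n^{\max}$ for every $k \ge n$. This follows by induction on $k$. The base case $k = n$ is the definition of a good index. For the step, Step 2 gives $\phi(\m{x}_{k+1}) \le \phi_k^R \le \phi_n^{\max}$, so every entry in the window defining $\phi_{k+1}^{\max}$ is either this new value or an older iterate value already bounded by $\phi_n^{\max}$, whence $\phi_{k+1}^{\max} \le \phi_n^{\max}$; and (R2) with the inductive bounds gives $\phi_{k+1}^R \le \max\{\phi_k^R, \phi_{k+1}^{\max}\} \le \phi_n^{\max}$. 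Thus after any good index the reference and window-maximum values may fluctuate but never rise above the good-index level $\phi_n^{\max}$.

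Next I would build the blocks. Using (\ref{kL}) repeatedly, choose good indices $m_1 < m_2 < \cdots$ with $m_j + M \le m_{j+1} < m_j + M + L$, so consecutive good indices are spaced at least $M$ and at most $M+L$ apart, and set $\psi_j := \phi_{m_j}^{\max}$; by the claim $\psi_j$ is nonincreasing. Letting $k_j \in (m_j - M, m_j]$ attain $\phi_{m_j}^{\max} = \phi(\m{x}_{k_j})$, the spacing $m_j - m_{j-1} \ge M$ forces $k_j - 1 \ge m_{j-1}$, so the claim applied at $n = m_{j-1}$ gives $\phi_{k_j-1}^R \le \psi_{j-1}$. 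Substituting $k = k_j$ into (\ref{h5}) and into the step bound then reproduces precisely (\ref{phi_i}) and (\ref{s_k_i}) with $\phi_i,\phi_{i-1}$ replaced by $\psi_j,\psi_{j-1}$; for the strongly convex case the bound (\ref{bk}) is recovered the same way from $\phi(\m{x}_{k_j-1}) \le \phi_{k_j-1}^{\max} \le \psi_{j-1}$. Hence the purely algebraic remainders of the two earlier proofs apply verbatim to $\psi_j$, giving a bound of the form $\psi_j - \phi^* \le a'/(b'+j)$ for all large $j$ in the convex case, and $\psi_j - \phi^* \le \gamma(\psi_{j-1}-\phi^*)$ with $\gamma \in (0,1)$ in the strongly convex case. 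For the convex case one first needs $\psi_j \to \phi^*$: since $\psi_j$ is nonincreasing and bounded below by $\phi^*$ it has a limit $\bar{\phi}$, the step bound forces $\|\m{s}_{k_j}\| \to 0$, and letting $j \to \infty$ and then $\lambda \to 0$ in the $\psi$-analog of (\ref{phi_i}) yields $\bar{\phi} \le \phi^*$, hence $\bar{\phi}=\phi^*$, mirroring Lemma \ref{phi_k_converge}. Finally, for any $k$ with $m_{j-1} < k \le m_j$ the claim gives $\phi(\m{x}_k) \le \phi_{k-1}^R \le \psi_{j-1}$, while $m_j - m_{j-1} \le M+L$ yields a linear lower bound $j \ge c_0 k - c_1$; feeding these into the $\psi_j$ estimates converts the per-block rates into the stated $a/(b+k)$ and $c\theta^k$ bounds with $\theta = \gamma^{1/(M+L)}$, after adjusting constants to absorb the finitely many indices $k \le m_1$.

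The main obstacle is exactly the loss of monotonicity of $\phi_k^R$, on which both earlier proofs rested; everything else is bookkeeping. The delayed-monotonicity claim of the second paragraph is the device that restores it, and sampling good indices at spacing at least $M$ is what guarantees that the maximizing index $k_j$ of one block has its predecessor $k_j-1$ lying at or beyond the previous good index, so that $\phi_{k_j-1}^R$ is controlled by $\psi_{j-1}$. I expect the only delicate points to be the index arithmetic in these two places and the verification that the window defining $\phi_k^{\max}$ never reaches back past the controlling good index, which is precisely where the quantitative hypothesis (\ref{kL}), rather than the weaker (R3), is needed.
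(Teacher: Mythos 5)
Your argument is correct, but it takes a genuinely different route from the paper's. The paper's proof is a short reduction: it extracts from (\ref{kL}) and (R2) a sequence of indices $k_i$ at which $\phi_{k_i}^R \le \phi_{k_i}^{\max}$ with $k_{i+1}-k_i \le L$, shows $\phi_j^R \le \phi_{k_i}^R \le \phi_{k_i}^{\max} \le \phi_j^{\max+}$ where $\phi_j^{\max+}$ is the GLL value with the enlarged memory $M+L$, and concludes that the iterates satisfy the GLL descent condition with memory $M+L$; Theorems \ref{theorem_sublinear} and \ref{theorem_linear} (whose proofs use only that descent condition plus the optimality of $\m{x}_{k+1}$ in the subproblem) then apply verbatim with $M$ replaced by $M+L$. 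You instead prove a delayed-monotonicity lemma at the good indices and rerun the block recursion of (\ref{phi_i})--(\ref{s_k_i}) by hand, anchoring the blocks at good indices spaced between $M$ and $M+L$ apart so that the maximizing index of each block is controlled by the previous block's level. Both are sound; the paper's reduction is more economical, while your version is more self-contained --- in particular your limiting argument ($j\to\infty$ then $\lambda\to 0$ in the $\psi$-analog of (\ref{phi_i})) gives $\psi_j \to \phi^*$ directly, whereas the paper's route still leans on Lemma \ref{phi_k_converge} and hence on the cited results of \cite{wright09a}. The one point in your write-up worth making explicit is why the pre-$n$ entries of the window defining $\phi_{k+1}^{\max}$ are bounded by $\phi_n^{\max}$: for $k\ge n$ that window reaches back only to index $k+2-M \ge n+2-M$, so every such entry already lies in the window defining $\phi_n^{\max}$; this is exactly where the window length $M$ and your spacing $m_{j+1}-m_j \ge M$ interact, and it is correct as you set it up.
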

\smallskip

\begin{proof}
Let $k_i$, $i = 1, 2, \ldots$,
denote an increasing sequence of integers
with the property that $\phi_j^R \le \phi_j^{\max}$ for $j = k_i$ and
$\phi_j^R \le \phi_{j-1}^R$ when $k_i < j < k_{i+1}$.
Such a sequence exists since
$\phi_k^R \le \max\{\phi_{k-1}^R, \phi_k^{\max}\}$ for each $k$ and
(\ref{kL}) holds.
Moreover, $k_{i+1} - k_i \le L$.
Hence, we have
\begin{equation}\label{Rmax}
\phi_j^R \le \phi_{k_i}^R \le \phi_{k_i}^{\max}, \quad
\mbox{when } k_i \le j < k_{i+1} .
\end{equation}
Let us define
\[
\phi_j^{\max +} =
\max \{ \phi (\m{x}_{j-i} : 0 \le i < \min (j, M+L) \} .
\]
Given $j$, choose $k_i$ such that $j \in [k_i, k_{i+1})$.
Since $j - k_i < L$, the set of function values maximized to obtain
$\phi_{k_i}^{\max}$ is contained in the set of
function values maximized to obtain $\phi_j^{\max+}$ and
we have
\begin{equation}\label{z2}
\phi_{k_i}^{\max} \le \phi_j^{\max+} .
\end{equation}
Combining (\ref{Rmax}) and (\ref{z2}) yields
$\phi_j^R \le \phi_j^{\max +}$ for each $j$.
In Step 2 of SpaRSA, the iterates are chosen to satisfy the
condition
\[
\phi (\m{x}_{k+1}) \le \phi_k^R - \sigma \alpha
\|\m{x}_{k+1} - \m{x}_k\|^2 .
\]
It follows that
\[
\phi (\m{x}_{k+1}) \le \phi_k^{\max +} - \sigma \alpha
\|\m{x}_{k+1} - \m{x}_k\|^2 .
\]
Hence, the iterates also satisfy the GLL condition, but with
memory of length $M+L$ instead of $M$.
By Theorem $\ref{theorem_sublinear}$, the iterates converge
at least sublinearly.
Moreover, if the strong convexity condition
$(\ref{StrongConvexity})$ holds, then the convergence is
R-linear by Theorem \ref{theorem_linear}.
\end{proof}

\section{Computational experiments}
\label{experiments}
In this section, we compare the performance of SpaRSA with the GLL
reference function value $\phi_k^{\max}$ and the BB choice for $\alpha_0$
in SpaRSA, to that of an adaptive implementation based on the
reference function value $\phi_k^R$ given in the appendix of \cite{hz05a}
and a cyclic BB choice for $\alpha_0$.
We call this implementation Adaptive SpaRSA.
This adaptive choice for $\phi_k^R$ satisfies (R1)--(R3) which ensures
convergence in accordance with Theorem \ref{liminf}.
By a cyclic choice for the BB parameter
(see \cite{dyh01, dhsz05, fmmr99, rs02}),
we mean that $\alpha_0 = \alpha_k^{BB}$ is reused for several iterations.
More precisely, for some integer $m \ge 1$ (the cycle length),
and for all $k \in ((i-1)m, im]$,
the value of $\alpha_0$ at iteration $k$ is given by
\[
(\alpha_0)_k = \alpha_{(i-1)m +1}^{BB} .
\]

The test problems are associated with applications in the areas of
signal processing and image reconstruction.
All experiments were
carried out on a PC using Matlab 7.6 with a AMD Athlon 64 X2 dual
core 3 Ghz processor and 3GB of memory running Windows Vista.
Version 2.0 of SpaSRA was obtained from M\'{a}rio Figueiredo's webpage
(http://www.lx.it.pt/$\sim$mtf/SpaRSA/).
The code was run with default parameters.
Adaptive SpaRSA was written in Matlab
with the following parameter values
\[
\alpha_{\min} = 10^{-30}, \quad \alpha_{\max} = 10^{30}, \quad \eta = 5,
\quad \sigma = 10^{-4}, \quad M = 10 .
\]
The test problems, such as the basis pursuit denoising problem
(\ref{BPDN}), involve a parameter $\tau$.
The choice of the cycle length was based on the value of $\tau$:
\[
m = 1 \mbox{ if } \tau \ge 10^{-2}, \mbox{ otherwise } m = 3 .
\]
As $\tau$ approaches zero, the optimization problem becomes
more ill conditioned and the convergence speed improves when
the cycle length is increased.

The stopping condition for both SpaRSA and Adaptive SpaRSA was
\[
\alpha_k \| \m{x}_{k+1} - \m{x}_k \|_{\infty} \le \epsilon ,
\]
where $\alpha_k$ denotes the final value for $\alpha$ in Step 2 of
SpaRSA, $\| \cdot \|_{\infty}$ is the max-norm, and
$\epsilon$ is the error tolerance.
This termination condition is suggested by Vandenberghe in
\cite{Vandenberghe09}.
As pointed out earlier, $\m{x}_k$ is a stationary point when
$\m{x}_{k+1} = \m{x}_k$.
For other stopping criteria, see \cite{HaleZhang07} or \cite{wright09a}.
In the following tables, ``Ax" denotes the number of times that a
vector is multiplied by $\m{A}$ or $\m{A}\tr$,
``cpu" is the CPU time in seconds, and ``Obj" is the objective function value.



\subsection{$\ell_2-\ell_1$ problems}
\label{l2l1}

We compare the performance of Adaptive SpaRSA with SpaRSA
by solving $\ell_2-\ell_1$ problems of
form (\ref{BPDN}) using the randomly generated data introduced in
\cite{BoydL107,wright09a}. The matrix $\m{A}$ is a random
$k\times n$ matrix, with $k = 2^8$ and $n=2^{10}$.
The elements of $\m{A}$ are chosen from a
Gaussian distribution with mean zero and variance $1/(2n)$.
The observed vector is $\m{b} = \m{Ax}_{true} + \m{n}$,
where the noise $\m{n}$ is sampled from
a Gaussian distribution with mean zero and variance $10^{-4}$.
$\m{x}_{true}$ is a vector with 160 randomly placed $\pm1$ spikes
with zeros in the remaining elements. This is a typical sparse
signal recovery problem which often arises in compressed sensing
\cite{Wright07l1}. We solved the problem (\ref{BPDN}) corresponding
to the error tolerance $10^{-5}$ with different regularization
parameters $\tau$ between $10^{-1}$ and $10^{-5}$.
Table \ref{tab1} reports the
average cpu times (seconds) and the number of matrix-vector
multiplications over 10 runs for both the original SpaRSA algorithm
and an implementation based on a continuation method (see \cite{HaleZhang07}).
The implementations using the continuation method are indicated by ``/c''
in Table \ref{tab1}.
These results show
that the Adaptive SpaRSA is significantly faster than SpaSRA
when not using the continuation technique.
The performance gap
decreases when the continuation technique is applied.
Nonetheless, Adaptive SpaRSA yields better performance.

Figure \ref{Err} plots error versus the
number of matrix-vector multiplication
for $\tau=10^{-4}$ and the implementation without continuation.
When the error is large, both algorithm have the same performance.
As the error tolerance decreases, the performance of the adaptive
algorithm is significantly better than the original implementation.


\def\baselinestretch{1}
\begin{table}[h!b!p!]
\caption{Average over 10 runs for $\ell_2-\ell_1$ problems}
\begin{small}
\begin{center}
\begin{tabular}{|c|rr|rr|rr|rr|rr|}
    \hline
  $\tau$        &  & 1e-1&  & 1e-2 &  & 1e-3  & & 1e-4  & &  1e-5  \\
    \hline
   &Ax &cpu& Ax &cpu &Ax &cpu &Ax&cpu&Ax &cpu \\
\hline
SpaRSA   & 65.3   &.07 & 706.4  & .56 & 3467.5  & 2.73  & 8802.9  & 6.86& 5925.5 & 4.65 \\
Adaptive & 65.4   &.07 & 582.8  & .44 & 1998.8  & 1.58  & 4394.0  & 3.50& 2911.9 & 2.36 \\
SpaRSA/c   & 65.3   &.07 & 626.7  & .48 & 2172.1  & 1.67  & 684.9   & .52 & 474.8  & .36  \\
Adaptive/c   & 65.4   &.07 & 569.0  & .44 & 1928.3  & 1.51  & 636.0   & .50 & 453.7  & .34  \\
\hline
\end{tabular}
\label{tab1}
\end{center}
\end{small}
\end{table}
\def\baselinestretch{1}

\begin{figure}
\begin{center}
\includegraphics[width=14cm,clip]{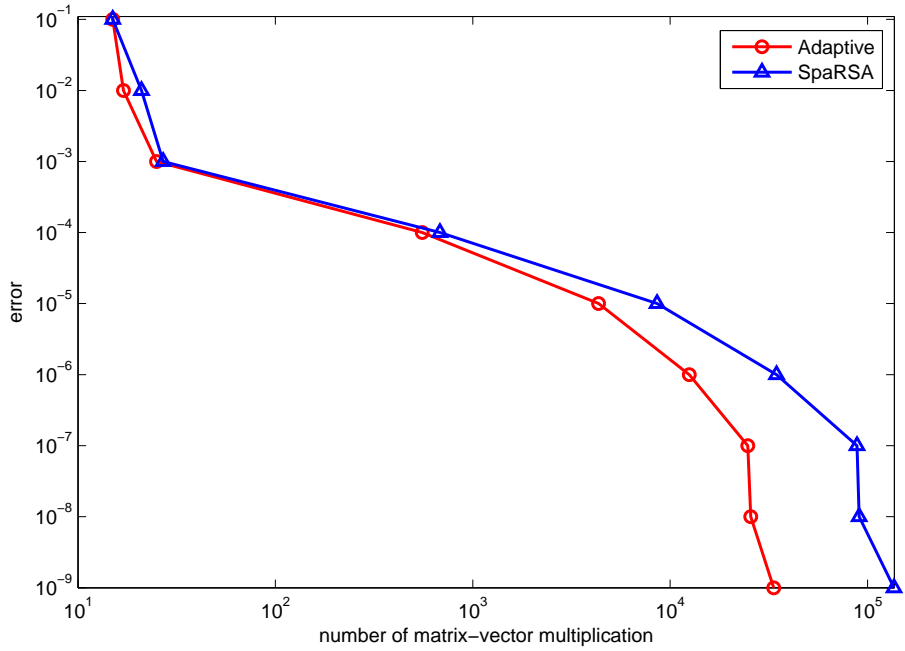}
\end{center}
\caption[Short caption for figure 1]{\label{Err} Number of matrix-vector
multiplications versus error}
\end{figure}

\subsection{Image deblurring problems}
In this subsection, we present
results for two image restoration
problems based on images referred to as
\emph{Resolution} and \emph{Cameraman}.
The images are $256\times256$ gray scale images; that is,
$n = 256^2 = 65536$.
The images are blurred by convolution with an $8\times8$
blurring mask and normally distributed noise with standard deviation
$0.0055$ is added to the final signal
(see problem 701 in \cite{Berg09}).
The image restoration problem has the form (\ref{BPDN})
where $\tau = 0.00005$ and
%
%
$\m{A} = \m{HW}$ is the composition of the
blur matrix and the Haar discrete wavelet transform (DWT) operator.
For these test problems, the continuation approach is no faster,
and in some cases significantly slower,
than the implementation without continuation.
Therefore, we solved these test problems without the
continuation technique.
The results in Table \ref{tab2} again indicate that the adaptive scheme
yields much better performance as the error tolerance decreases.

\begin{figure}
\begin{minipage}[t]{8cm}
\begin{center}
\includegraphics[width=7.7cm,clip]{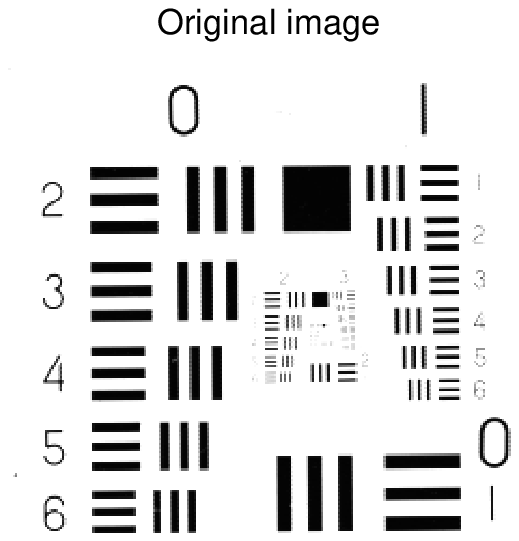}
\end{center}
\vspace{-1.3cm}
\begin{center}
\includegraphics[width=7.7cm,clip]{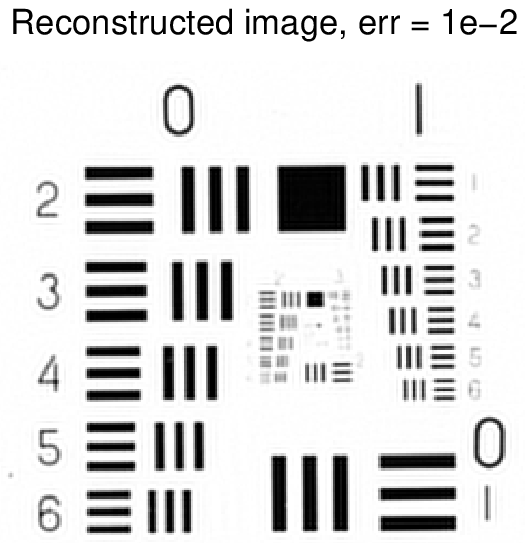}
\end{center}
\end{minipage}
\hfill \hspace{-1.9cm} \begin{minipage}[t]{8cm}
\begin{center}
\includegraphics[width=7.7cm,clip]{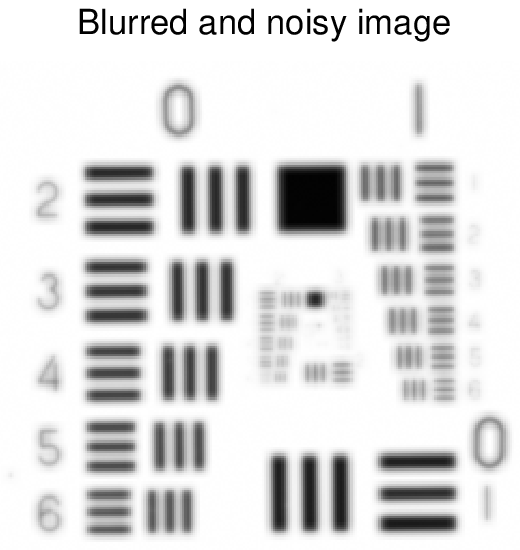}
\end{center}
\vspace{-1.3cm}
\begin{center}
\includegraphics[width=7.7cm,clip]{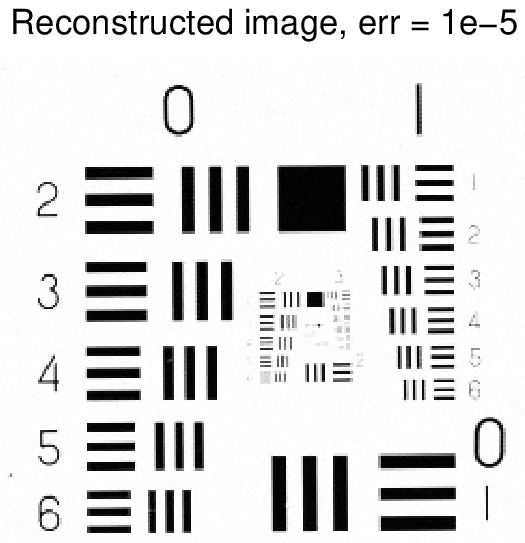}
\end{center}
\end{minipage}
\vspace{-1.2cm} \caption[Short caption for figure 1]{\label{resol}
Deblurring the resolution image}
\end{figure}
\begin{figure}
\begin{minipage}[t]{8cm}
\begin{center}
\includegraphics[width=7.8cm,clip]{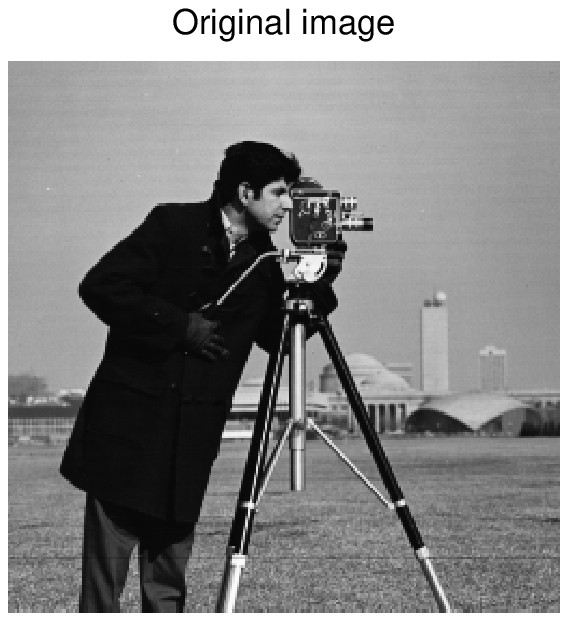}
\end{center}
\vspace{-0.8cm}
\begin{center}
\includegraphics[width=7.8cm,clip]{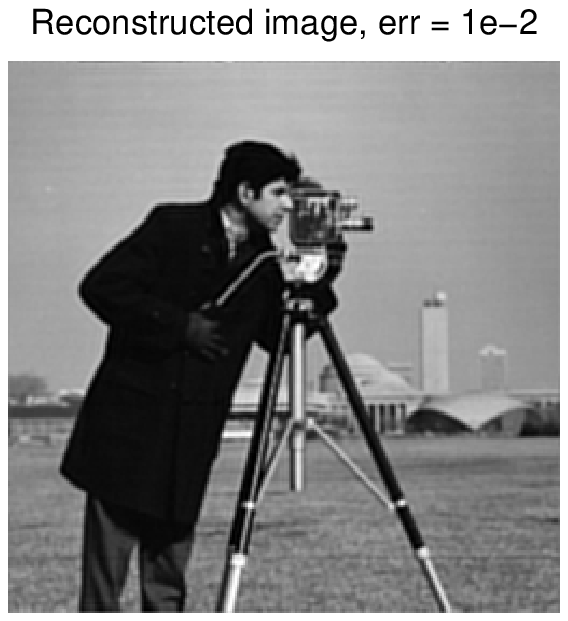}
\end{center}
\end{minipage}
\hfill \hspace{-2.0cm} \begin{minipage}[t]{8cm}
\begin{center}
\includegraphics[width=7.8cm,clip]{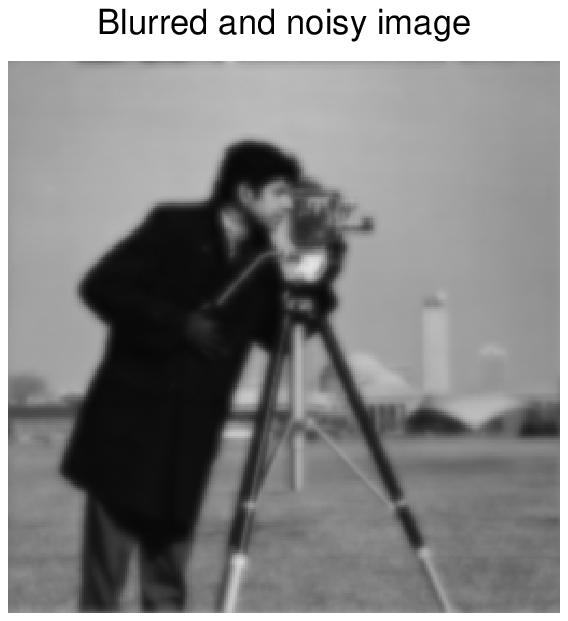}
\end{center}
\vspace{-0.8cm}
\begin{center}
\includegraphics[width=7.8cm,clip]{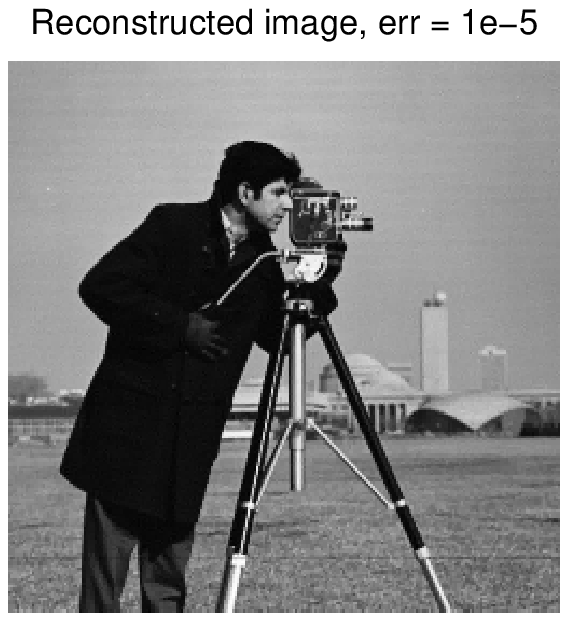}
\end{center}
\end{minipage}
\vspace{-0.5cm} \caption[Short caption for figure
1]{\label{cameraman} Deblurring the cameraman image}
\end{figure}

\def\baselinestretch{1.3}
\begin{table}[h!b!p!]
\caption{Deblurring images}
\begin{scriptsize}
\begin{center}
\begin{tabular}{|c|rcr|rcr|rcr|rcr|}
    \hline
error          &  && 1e-2 &  && 1e-3  & & & 1e-4 & &&  1e-5  \\
    \hline
   &Ax &cpu& Obj &Ax &cpu & Obj &Ax &cpu & Obj &Ax&cpu & Obj\\

\hline
\multicolumn{13}{|l|}{Resolution} \\
\hline
SpaRSA  & 49   &2.57 & .4843  & 88 & 4.80 & .3525  & 458 &24.74& .2992 & 1679 &88.27& .2970 \\
Adaptive  & 37   &1.93 & .5619  & 73  & 4.02  & .3790  & 316 &17.28& .2981 & 681 &35.90& .2970\\
\hline

\multicolumn{13}{|l|}{Cameraman} \\
\hline
SpaRSA    & 34   &1.66 & .3491   & 77 & 3.99 & .2181  & 332 &17.08& .1880 & 1356 &69.45& .1868 \\
Adaptive      & 35    &1.71 & .3380  & 63  & 3.31  & .2232  & 215 &11.20& .1880 & 599 &31.4& .1868\\
\hline

\end{tabular}
\label{tab2}
\end{center}
\end{scriptsize}
\end{table}
\def\baselinestretch{1}

\subsection{Group-separable regularizer}

In this subsection, we examine performance
using the group separable regularizers \cite{wright09a} for which
\[
\psi(\m{x}) = \tau \sum_{i=1}^n \|\m{x}_{[i]}\|_2 ,
\]
where $\m{x}_{[1]}, \m{x}_{[2]},\ldots,\m{x}_{[m]}$ are $m$ disjoint
subvectors of $\m{x}$. The smooth part of $\phi$ can be expressed as $f(\m{x})
= \frac{1}{2}\| \m{A}\m{x} - \m{b}\|^2$, where $\m{A} \in
\mathbb{R}^{1024\times 4096}$ was obtained by orthonormalizing
the rows of a matrix constructed in Subsection \ref{l2l1}. The true vector
$\m{x}_{true}$ has 4096 components divided into $m = 64$ groups of
length $l_i = 64$. $\m{x}_{true}$ is generated by randomly choosing
8 groups and filling them with numbers chosen from a
Gaussian distribution with zero mean and
unit variance, while all other groups are filled with zeros.
The target vector is $\m{b} = \m{Ax}_{true} + \m{n}$, where $\m{n}$
is Gaussian noise with mean zero and variance $10^{-4}$.
The regularization parameter is chosen as suggested in \cite{wright09a}:
$\tau = 0.3 \|\m{A} \tr \m{b}\|_{\infty}$.
We ran 10 test problems with
error tolerance $= 10^{-5}$ and compute the average results.
Adaptive SpaRSA solved the test problem in 0.8420 seconds with
67.4 matrix/vector multiplications, while the SpaRSA obtained
similar performance: 0.8783 seconds and
69.1 matrix/vector multiplications.
Figure \ref{groupl2} shows the result
obtained by both methods for one sample.

\begin{figure}
\begin{center}
\includegraphics[width=12cm,clip]{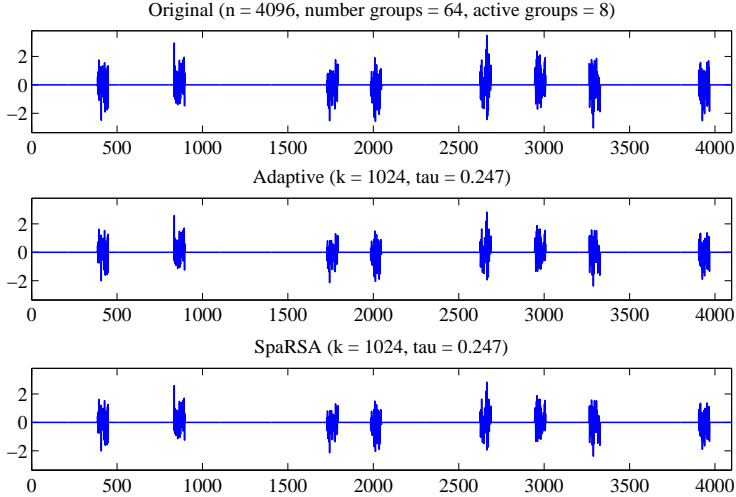}
\end{center}
\caption[Short caption for figure 1]{\label{groupl2} Group-separable
reconstruction }
\end{figure}

\subsection{Total-variation phantom reconstruction}

In this experiment, the image is the Shepp-Logan phantom of size
$256 \times 256$ (see \cite{Figueiredo07,Cand05}). The objective
function was
\[
\phi(\m{x}) = \frac{1}{2}\| \m{A}(\m{x}) - \m{b}\|^2 +
.01 \mbox{TV}(\m{x})
\]
where $\m{A}$ is a $6136\times256^2$ matrix
corresponding to 6136 locations in the 2D Fourier plane
(\verb"masked_FFT" in Matlab).
The total variation (TV) regularization is defined as follows
\[
\mbox{TV}(\m{x}) = \sum_i \sqrt{\left(\triangle_i^h \m{x}\right)^2 +
\left(\triangle_i^v \m{x}\right)^2}
\]
where $\triangle_i^h$ and $\triangle_i^v$ are linear operators
corresponding to horizontal and vertical first order differences
(see \cite{Figueiredo06}).
As seen in Table \ref{tab3},
Adaptive SpaRSA was faster than
the original SpaRSA when the error tolerance was sufficiently small.

\def\baselinestretch{1}
\begin{table}[h!b!p!]
\caption{Total-variation phantom reconstruction}
\begin{small}
\begin{center}
\begin{tabular}{|c|rcr|rcr|rcr|}
    \hline
error          &  && 1e-2 &  && 1e-3  & & & 1e-4  \\
    \hline
   &Ax &cpu& Obj &Ax &cpu & Obj &Ax &cpu & Obj \\
   \hline
SpaRSA   &14 &2.55& 36.7311 &143 &30.06 & 14.7457 &2877 &938.25 & 14.1433 \\
Adaptive &14 &2.57& 36.7311 &136 &27.32 & 14.6840 &731  &185.62 & 14.1730 \\
\hline
\end{tabular}
\label{tab3}
\end{center}
\end{small}
\end{table}
\def\baselinestretch{1}

\noindent
\begin{figure}
\begin{minipage}[t]{6.8cm}
\begin{center}
\includegraphics[width=5.8cm,clip]{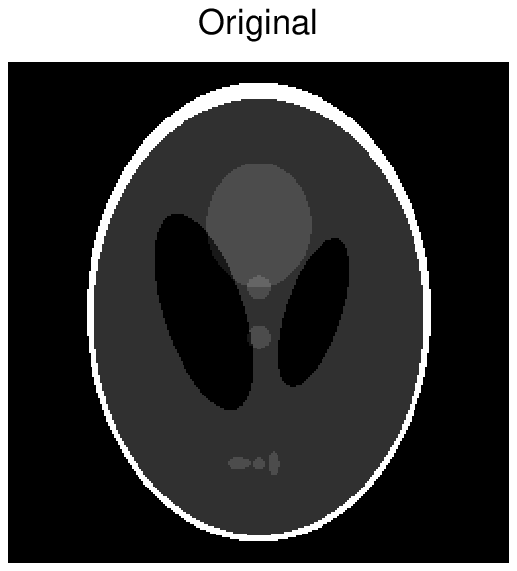}
\includegraphics[width=5.8cm,clip]{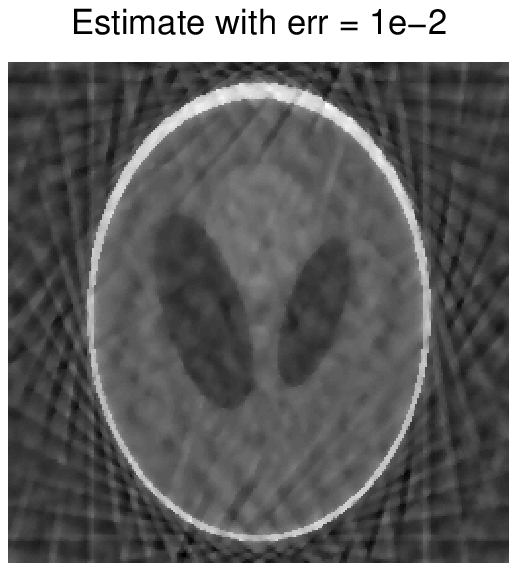}
\end{center}
\end{minipage}
\hfill \hspace{-1.2cm} \begin{minipage}[t]{6.5cm}
\begin{center}
\includegraphics[width=5.8cm,clip]{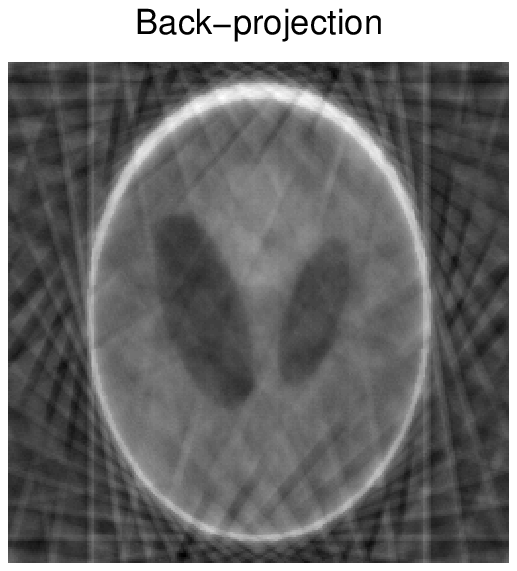}
\includegraphics[width=5.8cm,clip]{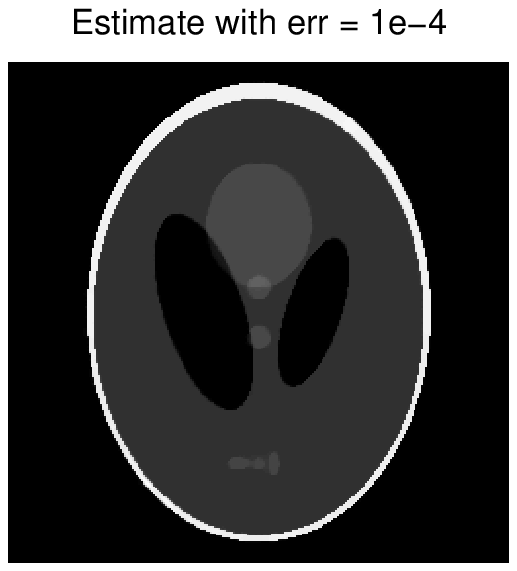}
\end{center}
\end{minipage}
\caption[Short caption for figure 1]{\label{phantom} Phantom
reconstruction}
\end{figure}

\section{Conclusions}
\label{conclusions}
The convergence properties of the
SpaRSA algorithm (Sparse Reconstruction by Separable Approximation)
of Wright, Nowak, and Figueiredo \cite{wright09a} are analyzed.
We establish sublinear convergence when
$\phi$ is convex and the GLL reference function value \cite{gll86} is employed.
When $\phi$ is strongly convex, the convergence is R-linear.
For a reference function value which satisfies (R1)--(R3),
we prove the existence of a convergent subsequence of iterates
that approaches a stationary point.
For a slightly stronger version of (R3), given in
(\ref{kL}), we show that
sublinear or linear convergence again hold when
$\phi$ is convex or strongly convex respectively.
In a series of numerical experiments, it is shown that an
Adaptive SpaRSA, based on a relaxed choice of the reference function
value and a cyclic BB iteration \cite{dhsz05, hz05a},
often yields much faster convergence,
especially when the error tolerance is small.

\end{document}